      \theoremstyle{plain}
      \newtheorem{assumption}{Assumption}
\def\be{{\bf{e}}}
\def\bl{{\bf{l}}}
\def\bj{{\bf{j}}}
\newtheorem{theorem}{Theorem}[section]
\newtheorem{lemma}[theorem]{Lemma}
\theoremstyle{definition}
\newtheorem{remark}{Remark}
\title[UQ for Multiscale Semiconductor Boltzmann with Random Inputs] 
      {Uniform Spectral Convergence of the Stochastic Galerkin method for the Linear Semiconductor Boltzmann Equation with Random Inputs and Diffusive Scaling}
\author[Liu Liu]{}
 \keywords{uncertainty quantification, semiconductor Boltzmann, multiscale, stochastic Galerkin method, 
uniform regularity, uniform spectral convergence}
\thanks{
The author was supported in part by Prof. Shi Jin's NSF grants DMS-1522184 and DMS-1107291: RNMS KI-Net.}
\begin{document}
\maketitle

\centerline{\scshape Liu Liu $^*$}
\medskip
{\footnotesize
 \centerline{Department of Mathematics, University of Wisconsin-Madison,}
  \centerline{Madison, Wisconsin, 53706, USA}
 } 

\bigskip

\begin{abstract}
In this paper, we study the generalized polynomial chaos (gPC) based stochastic Galerkin method for the linear
semiconductor Boltzmann equation under diffusive scaling and with random inputs from an {\it anisotropic} collision kernel and the random initial condition. 
While the numerical scheme and the proof of {\it uniform-in-Knudsen-number regularity} of the distribution function in the random space
has been introduced in 
\cite{JinLiu-2016}, the main goal of this paper is to first obtain a sharper estimate on the regularity of the solution--an exponential decay towards its local equilibrium, 
which then lead to the {\it uniform spectral convergence} of the stochastic Galerkin method for the 
problem under study. 
\end{abstract}

\section{Introduction}

Despite the vast amount of existing research and the ever-growing trend of development of kinetic theory \cite{Cer},  
the study of kinetic equations
remained deterministic and ignored uncertainty in the model, which might yield inaccurate solution for practical problems, for example,  
in mesoscopic modeling of physical, biological and social sciences. 

In this paper, we consider the linear semiconductor Boltzmann equation \cite{MRS,Jungel} with random inputs, which arise from the collision kernel and initial data. 
There are many sources of uncertainties that can arise in kinetic equations. 
For example, the collision or scattering kernel contained
in the integral operator that models the interaction mechanism between particles 
should be calculated from first principles ideally, 
which is extremely complicated for complex particle systems. Thus empirical collision kernels are usually used in 
practice and measurement errors may arise \cite{HBS}.
Other sources of uncertainties can be due to inaccurate measurement of the initial and boundary data, forcing or source terms, gas-surface interactions and
geometry. 
The uncertainties are not limited to the above examples.
Understanding the impact and propagation of uncertainties is essential to the simulations and validation of the complex kinetic systems, and furthermore,
provides reliable predictions and better risk assessment for scientists and engineers. 

We apply the generalized polynomial chaos approach in the stochastic
Galerkin (referred as gPC-SG) framework \cite{GS, LMK, XiuBook}.  
Compared with the classical Monte-Carlo method, the gPC-SG approach enjoys a 
spectral accuracy in the random space--if the solution is sufficient regular--while the Monte-Carlo method converges with only half-th order accuracy. 
For recent activities for uncertainty quantificaiton in kinetic theory,
we refer to a recent review article \cite{HuReview}, which surveyed recent results in the study of kinetic equations with random inputs,
\cite{JinLiu-2016, Mu-Liu, JinHu-2016, ShuHuJin, JinLiuMa-2016, QinWang, JinZhu, LuJin, Erin, ShuJin}, 
including their mathematical properties such as regularity and long-time behavior in the random space, 
construction of efficient stochastic Galerkin methods and handling of multiple scales by s-AP schemes. 

Recently, the authors in \cite{LiuJin-2017} have provided a general framework using hypocoercivity to study general class of linear and nonlinear 
kinetic equations with uncertainties from the initial data or collision kernels in both incompressible Navier-Stokes and Euler
(acoustic) regimes. For initial data near the global Maxwellian, an exponential convergence
of the random solution toward the deterministic global equilibrium, a spectral accuracy and exponential decay of the numerical error of
the SG system have been established. 
We also mention some recent work on uncertainty quantification for hyperbolic equations \cite{DGX, Jin-Ma, Tao-Zhou1, Tao-Zhou2}
and highly oscillatory transport equations arisen from 
{\it non-adiabatic} transition in quantum dynamics \cite{LemouJinLiu}. \\[1pt]

Consider the linear transport equation with {\it anisotropic} collision operator. 
Let $f(t,x,v,z)$ be the probability density distribution for particles at position $x\in\mathbb R^d$, 
with velocity $v\in\mathbb R^d$ for $t\geq 0$. 
 $f$ solves the following kinetic equation with random inputs, 
\begin{align}
&\label{eqn}\displaystyle \epsilon \partial_t f + v\cdot\nabla_x f = \frac{1}{\epsilon}\mathcal Q(f),  \\[6pt]
&\displaystyle \mathcal Q(f) = \int_{\mathbb R^d} \sigma(v,w,z) \left(M(v)f(w,z) - M(w)f(v,z)\right) dw, 
\end{align}
where $$M(v) = \frac{1}{(2\pi)^{d/2}}e^{-\frac{|v|^2}{2}}\,,$$
is the normalized Maxwellian distribution of the electrons. 
$\epsilon$ is the Knudsen number, which measures the ratio between the particle mean free path and a typical length scale. 
The anisotropic collision operator $\mathcal Q$ describes a linear approximation of the electron-phonon interaction. It is bounded and nonnegative on a 
suitable Hilbert space (\cite{Pou}) and has a one-dimensional kernel spanned by $M$. 

We assume the anisotropic scattering kernel $\sigma$ to be symmetric and bounded, 
\begin{equation}\label{sigma}\sigma_{\rm min} \leq \sigma(v,w,z)=\sigma(w,v,z) \leq \sigma_{\text{max}}. \end{equation}
$\sigma$ can be random in reality and
one assumes that it depends on the random 
variable $z\in\mathbb R^d$, with support $I_{z}$ and a prescribed probability density function $\pi(z)>0$. 

Denote \begin{equation}\label{Pi_Notation}\Pi f= M(v)\int_{\mathbb R^d} f dv, \end{equation} by the local equilibrium function. 
A periodic boundary condition in space is assumed. 
The initial condition can be random and is given by $$f|_{t=0}(x,v,z) = f_0(x,v,z).$$

One challenge in numerical approximations of kinetic and transport equations is the 
varying magnitude of the Knudsen number. 
Kinetic equations for highly integrated semiconductor devices have a diffusive scaling, measured by $\epsilon$. 
When $\epsilon$ goes to zero, the high scattering rate of particles leads the transport equation to a diffusion equation (\ref{diff1}),
known as the diffusion limit \cite{MRS, Pou, BSS}. 

For each value of random variable $z$, (\ref{eqn}) is a deterministic equation. 
As $\epsilon\to 0$, ${\mathcal Q}(f)=0$, since $\text{Kern}\mathcal Q=\text{Span}\{M(v)\}$, then $\displaystyle f(t,x,v,z)=\rho(t,x,z)M(v)$, where $\rho$ satisfies a random drift-diffusion equation
\cite{DJ,Pou,MRS,Klar}:
\begin{equation}   
\partial_{t}\rho=\nabla_{x}\cdot(D \nabla_{x}\rho), 
\label{diff1} 
\end{equation}
where the diffusion matrix $D$ is defined by $\displaystyle D=\int_{\mathbb R^d}\frac{v\otimes vM(v)}{\lambda(v,z)}dv$. 
The limit $\epsilon\to 0$ is known as the drift-diffusion limit. 

When $\epsilon$ is small, the equation becomes numerically stiff and requires expensive computational cost. 
To overcome this difficulty, asymptotic-preserving (AP) schemes \cite{Jin-Review, Jin-99, GJL, HuReview} are designed to mimic the asymptotic transition from the kinetic equations to the hydrodynamic limit, in the discrete setting \cite{LemouMieussens2008, JinLorenzo, Klar, JPT2}. 
The scheme automatically becomes a consistent discretization of the limiting macroscopic equations as $\epsilon\to 0$. 
The idea of stochastic asymptotic-preserving (s-AP) schemes was recently introduced in \cite{XZJ} for random kinetic equations with multiple scales. 
s-AP schemes in the gPC-SG framework allows the use of mesh sizes, 
time steps and the number of terms in the orthogonal polynomial expansions independent of the Knudsen number. 
The solution approaches, as $\epsilon\to 0$, to the gPC-SG method for the corresponding limiting, macroscopic equation with random inputs. 

In \cite{JinLiuMa-2016}, the authors prove the uniform regularity of the linear transport equations with random {\it isotropic} scattering coefficients, 
random initial data and diffusive scaling. 
\cite{QinWang} also carries out the analysis in a general setting and 
proves, using hypocoercivity for linear collision operators, the uniform regularity in the random space for all linear kinetic equations that conserve mass with random inputs. Their results hold true in kinetic, parabolic and high field regimes. 
Moreover, with an estimate on the regularity of $f-\Pi f$ in the random space, 
the authors in \cite{JinLiuMa-2016} are able to prove the {\it uniform} spectral convergence of the 
stochastic Galerkin method, a result that both \cite{JinLiu-2016} and \cite{QinWang} do not have. 
The main goal of this paper is to extend the results of \cite{JinLiuMa-2016} to the case of {\it anisotropic} scattering. Namely, we
first obtain the {\it uniform regularity} in the random space, then prove the 
{\it uniform spectral convergence} of the stochastic Galerkin method for problem (\ref{eqn}). 

Although the idea of the proof follows the line as in \cite{JinLiuMa-2016}, there are several differences due to the anisotropy of the collision kernel.
First, the specific estimates in all estimates are  different when one treats the anisotropic scattering. 
The major difference lies in the proof of the exponential decay of $f-\Pi f$. 
In contrast to the bounded velocity $v\in[-1,1]$ in \cite{JinLiuMa-2016}, $v\in\mathbb R^d$ in our problem, 
and an exponential decay estimate for $v\cdot\nabla_x f$ is needed, which brings up the main difficulty. 

In \cite{JinLiu-2016}, the uniform regularity was proved in the random space for problem (\ref{eqn}) by using the symmetric 
property of the collision operator $\mathcal Q$. To carefully specify the constant coefficients in the proof, 
linear dependence on $z$ of the collision kernel was assumed in \cite{JinLiu-2016}. In this paper our analysis does
not require the linearity in $z$.

This paper is organized as follows. 
We first introduce the gPC-SG method in section \ref{gPC}. 
Estimates on the regularity of the distribution function $f$ in the random space are studied in section \ref{RE}: 
subsection \ref{UR} proves the uniform regularity of $f$; subsection \ref{vf} gives an estimate of the regularity of $v\cdot\nabla_x f$, 
which serves as a building block to obtain the exponential decay of the regularity of 
$\Pi f-f$, a result shown in subsection \ref{Pi_f}. 
With all the results obtained in section \ref{RE}, we prove the uniform convergence 
of the gPC-SG method for problem (\ref{eqn}) in section \ref{UC}. Lastly, conclusion is provided in section \ref{Con}. 

\section{The gPC Stochastic Galerkin Approximation}
\label{gPC}
We briefly review the gPC method and its Galerkin formulation. 
In the gPC setting, one seeks for a numerical solution in term of $d-$
variate orthogonal polynomials of degree $N\geq 1$. The linear space $V_z$ is set to be $\mathbb P_N^d$, the space of 
$d$-variate orthonormal polynomials of degree up to $N\geq 1$. For random variable 
$z\in I_{z}\subset\mathbb R^d$, one approximates the solution $f$ by an orthogonal polynomial expansion $f_K$, that is, 
$$f(t,x,v,z)\approx f_K(t,x,v,z)=\sum_{|\bj|=1}^K\alpha_\bj(t,x,v)\psi_\bj(z), \,\, K= \binom{d+N}{d}, $$
where $\bj=(j_1,\dots, j_d)$ is a multi-index with $|\bj|=j_1+\dots+j_d$. 
$\{\psi_{\bj}(z)\}$ are the orthonormal basis functions that form $\mathbb P_N^d$ and satisfy
\begin{equation}
\label{Kronecker}
\int_{I_{z}} \psi_\bj(z)\psi_\bl(z)\pi(z)dz=\delta_{\bj\bl}, \qquad 1\leq |\bj|, |\bl|\leq K=\text{dim}(\mathbb P_N^d), 
\end{equation}
where $\pi(z)$ is the probability density function of $z$ and $\delta_{\bj\bl}$ the Kronecker Delta function. 

The orthogonality with respect to $\pi(z)$ defines the orthogonal polynomials. For example, 
the Gaussian distribution defines the Hermite polynomials; the uniform distribution defines the
Legendre polynomials; and the Gamma distribution defines the Laguerre polynomials, etc.
If the random dimension $d>1$, one can re-order the multi-dimensional polynomials $\{\psi_{\bj}(z), \, 1\leq |\bj|\leq K\}$
into a single index $\{\psi_k(z), \,1\leq k\leq N_k=\text{dim}(\mathbb P_N^d)=\binom{d+N}{d} \}$. 

By the gPC-SG approach, applying the ansatz $$\rho(t,x,z)=\sum_{|\bj|=1}^K \hat\rho_{\bj}(t,x)\psi_{\bj}(z)$$ and conducting the Galerkin projection 
of limiting diffusion equation (\ref{diff1}), one obtains a gPC approximation of the random diffusion equation (\cite{JinLiu-2016})
\begin{equation}\partial_t\hat\rho_{\bj}=\nabla_x\cdot(T\sum_{\bl}S_{\bj\bl}\nabla_x\hat\rho_{\bl}),  \label{diff2} \end{equation}
where $\displaystyle T=\int_{\mathbb R^d}v\otimes v M(v)dv$ and 
the gPC coefficient matrix $S=(S_{\bj\bl})_{K\times K}$ 
is given by \begin{align*} S_{\bj\bl}=\int_{I_{z}}\frac{1}{\lambda(v,z)}\psi_{\bj}(z)\psi_{\bl}(z)\pi(z)dz. \end{align*}

It has been demonstrated in \cite{JinLiu-2016} that solutions of the gPC-SG scheme converge spectrally to that of the
Galerkin system of the diffusion equation given by (\ref{diff2}). One can refer to Theorem 2.2 and Theorem 4.3 in \cite{JinLiu-2016} on the uniform regularity and a spectral accuracy (not uniform) of the gPC-SG method. 
The goal of this paper is to give a theoretical proof of 
the {\it uniform spectral convergence} of the SG method with respect to $\epsilon$.  

\section{Regularity Estimates}
\label{RE}
\subsection{Notations}
We first introduce the Hilbert space of the random variable, 
$$H(\mathbb R^d; \pi(z)dz) = \left\{  f \,|\,\mathbb R^d\to \mathbb R, \, \int_{\mathbb R^d}f^2(z)\pi(z)dz<\infty\right\}, $$
equipped with the corresponding inner product and norm
$$\langle f, g \rangle_{\pi} = \int_{I_z} f g \pi(z)\, dz, \qquad ||f||_{\pi}^2 = \langle f, f\rangle_{\pi}. $$
Define the $k$-th order differential operator with respect to $z$ as 
$$D^{k} f(t,x,v,z) := \partial_z^k f(t,x,v,z), $$
and the Sobolev norm in $H$ as $$ ||f(t,x,v,\cdot)||_{H^k}^2 := \sum_{\alpha\leq k}||D^{\alpha}f (t,x,v,\cdot) ||_{\pi}^2\,.$$

We introduce the Hilbert space of the velocity variable 
$\displaystyle\widetilde L^2 := L^2\left(\mathbb R^d; \frac{dv}{M(v)}\right)$, with the corresponding inner product
$\langle \cdot , \cdot \rangle_{\widetilde L^2}$ and norm $||\cdot||_{\widetilde L^2}$. 
By the coercivity property of the collision operator $\mathcal Q$ (\cite{RSZ}), for any $f\in\widetilde L^2$, we have
\begin{equation}\label{coer}\langle \mathcal Q(f), f\rangle_{\widetilde L^2} \leq -\sigma_{\rm min} || f - \Pi f ||_{\widetilde L^2}^2\,,
\end{equation}
where $\Pi f$ is the orthogonal projection of $\widetilde L^2$ onto $\text{Kern}\mathcal Q$ and is given in (\ref{Pi_Notation}). 
Let $x\in\Omega$, $v\in\mathbb R^d$, $z\in I_z$.  
Introduce the energy norms
\begin{align*}
&\displaystyle ||f(t,\cdot, \cdot, \cdot)||_{\Gamma}^2 :=  \int_{\Omega}\int_{\mathbb R^d} \frac{||f||_{\pi}^2}{M(v)}\,  dvdx,  \\[6pt]
&\displaystyle ||f(t,\cdot, \cdot, \cdot)||_{\Gamma^k}^2 := \int_{\Omega}\int_{\mathbb R^d}  \frac{||f||_{H^k}^2}{M(v)}\, dvdx. 
\end{align*}
For simplicity, we will suppress the $t$ dependence and denote $||f||_{\Gamma}$, $||f||_{\Gamma^k}$ in the following. 

\subsection{Regularity in the Random Space}
\label{UR}

In this section, we prove that the solution $f$ will preserve the regularity of the initial data in the random space. 
For simplicity, the following lemmas and theorems are stated only for one-dimensional case. 
Proof for the high dimensional case is identical except for the change of coefficients. 

We first show Lemma \ref{L1}, which will help us get the uniform regularity of $f$, a result given in Theorem \ref{thm1}. 

\begin{lemma}
\label{L1}
For any $k\geq 0$, there exist $k$ constants $C_{kj}>0$, $j=0, \cdots, k-1$ such that 
\begin{align}\label{MI}\displaystyle \epsilon^2 \partial_t \bigg( ||D^k f||_{\Gamma}^2 + \sum_{j=0}^{k-1}C_{kj} ||D^j f||_{\Gamma}^2 \bigg) \leq 
\begin{cases}   
  -2\sigma_{\rm min}\, ||\Pi f-f||_{\Gamma}^2\,,\qquad k=0, \\[6pt]
  -\sigma_{\rm min}\, ||D^k (\Pi f- f)||_{\Gamma}^2\,,\quad k\geq 1. 
\end{cases}
 \end{align}
 \end{lemma}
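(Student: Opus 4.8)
The plan is to run a weighted energy estimate on the $z$-derivative $D^k f=\partial_z^k f$ and to close the resulting family of inequalities by induction on $k$, using the lower-order estimates to absorb the error terms that come from differentiating the $z$-dependent collision kernel. First I would apply $D^k$ to \eqref{eqn}. Since the transport part $\epsilon\partial_t+v\cdot\nabla_x$ is $z$-independent, this yields
\[
\epsilon\,\partial_t(D^k f)+v\cdot\nabla_x(D^k f)=\tfrac1\epsilon\,D^k\mathcal Q(f),
\qquad
D^k\mathcal Q(f)=\mathcal Q(D^k f)+\sum_{l=1}^{k}\binom{k}{l}\,\mathcal Q_l(D^{k-l}f),
\]
where $\mathcal Q_l$ is the collision operator obtained by replacing the kernel $\sigma$ with $\partial_z^l\sigma$. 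Testing against $2\epsilon\,D^k f/M(v)$, integrating over $x\in\Omega$ and $v\in\mathbb R^d$, and taking the $z$-expectation $\langle\cdot\rangle_\pi$, the time term produces $\epsilon^2\partial_t\|D^k f\|_\Gamma^2$, while the transport term reduces to $\int_\Omega\int_{\mathbb R^d}v\cdot\nabla_x\!\big((D^k f)^2\big)/M\,dv\,dx$, which vanishes by the periodic boundary condition in $x$.

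For the leading collision term I would invoke the coercivity \eqref{coer}. Integrating it over $x$ and in $z$ gives $2\langle\mathcal Q(D^k f),D^k f\rangle_\Gamma\le-2\sigma_{\rm min}\|D^k(\Pi f-f)\|_\Gamma^2$, where I use that $\Pi$ commutes with $D^k$ (since $M(v)$ is $z$-independent and $v$-integration commutes with $\partial_z$), so that $D^k f-\Pi D^k f=D^k(f-\Pi f)$. For $k=0$ there are no cross terms and this is already the asserted inequality.

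The crux is the cross terms $2\sum_{l\ge1}\binom{k}{l}\langle\mathcal Q_l(D^{k-l}f),D^k f\rangle_\Gamma$, and here I would exploit two structural identities that survive differentiation of the kernel. Because $\partial_z^l\sigma$ is still symmetric, the same swap $v\leftrightarrow w$ that gives mass conservation yields $\int\mathcal Q_l(g)\,dv=0$, so $\mathcal Q_l$ maps into $(\mathrm{Kern}\,\mathcal Q)^\perp$ and $\Pi\mathcal Q_l=0$; and since the Maxwellian factors cancel, $\mathcal Q_l(\Pi g)=0$, hence $\mathcal Q_l(g)=\mathcal Q_l(g-\Pi g)$. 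The first identity lets me replace $D^k f$ by $D^k(f-\Pi f)$ in the inner product; the second, combined with the boundedness of $\partial_z^l\sigma$ (which gives $\|\mathcal Q_l(g)\|_{\widetilde L^2}\le C_l\|g-\Pi g\|_{\widetilde L^2}$ via Cauchy--Schwarz against the Gaussian weight), lets me bound each cross term by $C_l\|D^{k-l}(f-\Pi f)\|_\Gamma\,\|D^k(f-\Pi f)\|_\Gamma$. Crucially, both factors are the microscopic quantity $\Pi f-f$, exactly what the dissipation controls. A Young inequality with small parameter $\delta$ splits each term into $\tfrac\delta2\|D^k(\Pi f-f)\|_\Gamma^2$ plus $\tfrac1{2\delta}\|D^{k-l}(\Pi f-f)\|_\Gamma^2$; choosing $\delta$ so that $\delta\sum_l\binom{k}{l}C_l\le\sigma_{\rm min}$ consumes half of the coercive term and leaves
\[
\epsilon^2\partial_t\|D^k f\|_\Gamma^2
\le-\sigma_{\rm min}\|D^k(\Pi f-f)\|_\Gamma^2
+\sum_{j=0}^{k-1}A_{kj}\,\|D^j(\Pi f-f)\|_\Gamma^2 .
\]

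Finally I would close the estimate by a cascading choice of the constants $C_{kj}$. Each lower-order energy $\|D^j f\|_\Gamma^2$, $j<k$, obeys an inequality of the same type with its own coercive term $-\sigma_{\rm min}\|D^j(\Pi f-f)\|_\Gamma^2$ and errors only in $\|D^i(\Pi f-f)\|_\Gamma^2$ with $i<j$. Adding the combination $\sum_{j}C_{kj}\|D^j f\|_\Gamma^2$ and fixing $C_{k,k-1},\dots,C_{k,0}$ in decreasing order of $j$ — each large enough that $C_{kj}\sigma_{\rm min}$ dominates all positive $\|D^j(\Pi f-f)\|_\Gamma^2$ contributions generated above it — cancels every error term while leaving the top dissipation $-\sigma_{\rm min}\|D^k(\Pi f-f)\|_\Gamma^2$ untouched, since no lower-order estimate produces a $D^k$ term. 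I expect the main obstacle to be precisely the control of the cross terms produced by $\partial_z^l\sigma$: everything hinges on recognizing that the differentiated operators $\mathcal Q_l$ still annihilate the local equilibrium and map into its orthogonal complement, so the errors can be written purely through $\Pi f-f$ and absorbed by the dissipation rather than requiring a bound on the full $\|D^j f\|_\Gamma$. This is also where boundedness, rather than linearity, of $\sigma$ in $z$ suffices, replacing the linear-dependence assumption used in the earlier work.
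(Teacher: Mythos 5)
Your proposal is correct and follows essentially the same route as the paper: Leibniz expansion of $D^k\mathcal Q(f)$, coercivity for the leading term, the observation that the differentiated collision operators annihilate the local equilibrium and integrate to zero in $v$ (so every cross term is controlled purely by $\|D^j(\Pi f-f)\|_\Gamma$), Young's inequality to absorb half the dissipation, and an inductive weighted combination of the lower-order estimates to cancel the remainder. The only difference is bookkeeping — you fix the constants $C_{kj}$ in a descending cascade, while the paper sums all inequalities for $k\le p$ at once and multiplies by a single constant — which is immaterial.
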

\begin{proof}
The idea of the proof is similar to that in \cite{JinLiuMa-2016}. However, there are some differences due to the anisotropic collision operator. 
We will prove this Lemma by using Mathematical Induction. 

When $k=0$, (\ref{MI}) holds because of the coercivity property given by (\ref{coer}). 

Assume that (\ref{MI}) holds for any $k\leq p$, where $p\in \mathbb N$. Adding all these inequalities, we get
$$ \epsilon^2 \partial_t \bigg(\frac{1}{2}||f||_{\Gamma}^2 + \sum_{i=1}^p ||D^i f||_{\Gamma}^2 + \sum_{i=1}^p \sum_{j=0}^{i-1}C_{ij} || D^j f||_{\Gamma}^2 
\bigg) \leq -\sigma_{\rm min}\, || \Pi f -f||_{\Gamma^p}^2\,,
$$
which is equivalent to 
\begin{equation}\label{p0}\epsilon^2 \partial_t \bigg(\sum_{j=0}^p C_{p+1,j}^{\prime} ||D^j f||_{\Gamma}^2 \bigg) \leq -\sigma_{\rm min}\, || \Pi f -f||_{\Gamma^p}^2\,,
\end{equation}
where 
\begin{align*}
\displaystyle
C_{p+1,j}^{\prime} =
 \begin{cases} \frac{1}{2} + \sum_{i=1}^p C_{i 0}, \qquad j=0, \\[4pt]
                        1 + \sum_{i=1}^p C_{i j}, \qquad 1\leq j\leq p-1, \\[4pt]       
                        1, \qquad\qquad\qquad\quad j=p.                 
 \end{cases}
\end{align*}

For $k\geq 1$, take $k$-th order formal differentiation of (\ref{eqn}) with respect to $z$, 
\begin{equation}\label{Dk}
\epsilon^2 \partial_t (D^k f) + \epsilon v\cdot \nabla_x (D^k f) =D^k \mathcal Q(f). \end{equation}
Denote $d\mu=dx\pi(z)dz$, and $\mathcal S=\Omega\times I_z$. 
Taking a scalar product with $D^k f$, dividing by $M(v)$ to both sides of (\ref{Dk}) and integrating on $\Omega \times \mathbb R^d \times I_z$, one has
\begin{align}
&\quad\displaystyle\label{p+1}\frac{\epsilon^2}{2} \partial_t ||D^k f||_{\Gamma}^2 + \epsilon \int_{\mathcal S}\, \langle v\cdot \nabla_x (D^k f), D^k f\rangle_{\widetilde L^2}\,  d\mu=\int_{\mathcal S}\,\langle D^k \mathcal Q(f), D^k f \rangle_{\widetilde L^2}\, d\mu\,.
\end{align}
By the periodic boundary condition in space, 
$$\int_{\mathcal S}\, \langle v\cdot \nabla_x (D^k f), D^k f\rangle_{\widetilde L^2}\, d\mu=0\,. $$
Note that right-hand-side of (\ref{p+1}) is given by 
\begin{align}
&\displaystyle\int_{\mathcal S}\, \langle D^k \mathcal Q(f), D^k f \rangle_{\widetilde L^2}\, d\mu =
\int_{\mathcal S}\, \sum_{j=0}^{k-1}\binom{k}{j} \langle \widetilde{\mathcal Q}_{kj}(D^j f), D^k f\rangle_{\widetilde L^2}\, d\mu \notag\\[2pt]
&\label{RHS}\displaystyle \qquad\qquad\qquad\qquad\qquad\qquad +\int_{\mathcal S}\,\langle \mathcal Q(D^k f), D^k f\rangle_{\widetilde L^2}\, d\mu, 
\end{align}
where we define $$\widetilde{\mathcal Q}_{kj}(f) = \int_{\mathbb R^d} D^{k-j} \sigma(v,w,z) \left(D^j f(w,z)M(v) - D^j f(v,z)M(w) \right) dw.$$

By coercivity given in (\ref{coer}), the second term in (\ref{RHS}) satisfies
\begin{equation}\label{second}\int_{\mathcal S}\,\langle \mathcal Q(D^k f), D^k f\rangle_{\widetilde L^2}\, d\mu \leq -\sigma_{\rm min}\, || D^k (f- \Pi f)||_{\Gamma}^2\,.
\end{equation}
Notice that
$$\int_{\mathcal S}\,\langle \widetilde{\mathcal Q}_{kj}(D^j f), D^k(\Pi f)\rangle_{\widetilde L^2}\, d\mu 
= \int_{\mathcal S}\,\int_{\mathbb R^d}  \widetilde{\mathcal Q}_{kj}(D^j f) dv \int_{\mathbb R^d}D^k f(w)dw\, d\mu =0\,. $$
By Young's inequality, the first term in (\ref{RHS}) satisfies the estimate: 
\begin{align}
&\displaystyle \quad\int_{\mathcal S}\,\sum_{j=0}^{k-1}\binom{k}{j} \langle \widetilde{\mathcal Q}_{kj}(D^j f), D^k f\rangle_{\widetilde L^2}\, d\mu = 
\int_{\mathcal S}\,\sum_{j=0}^{k-1}\binom{k}{j} \langle \widetilde{\mathcal Q}_{kj}(D^j f), D^k(f-\Pi f)\rangle_{\widetilde L^2}\, d\mu \notag \\[4pt]
&\label{first}\displaystyle \leq 
\frac{\sigma_{\rm min}}{2}\,||D^k (f-\Pi f)||_{\Gamma}^2 + \frac{1}{2\sigma_{\rm min}} \, \bigg|\bigg|\, \sum_{j=0}^{k-1}\binom{k}{j} 
\widetilde{\mathcal Q}_{kj}(D^j f)\, \bigg|\bigg|_{\Gamma}^2\,,
\end{align}
then by using the Cauchy-Schwartz inequality, one has
\begin{align}
&\displaystyle\quad  \bigg|\bigg|\, \sum_{j=0}^{k-1}\binom{k}{j} 
\widetilde{\mathcal Q}_{kj}(D^j f)\, \bigg|\bigg|_{\Gamma}^2 \\[4pt]
&\displaystyle \leq \sum_{j=0}^{k-1} \binom{k}{j}^2\, 
\sum_{j=0}^{k-1}\, \int_{\mathcal S}\,  \bigg|\bigg|\int_{\mathbb R^d}D^{k-j}\sigma\, (M(v)D^j f(w,z) - M(w)D^j f(v,z)) dw\, \bigg|\bigg|_{\widetilde L^2}^2\, d\mu\notag\\[4pt]
&\displaystyle \leq 4^k \left(\max_{0\leq j\leq k} ||D^j\sigma||_{L^{\infty}}^2\right) \left(\sum_{j=0}^{k-1}\,\int_{\mathcal S}\, ||D^j (\Pi f - f)||_{\widetilde L^2}^2\, d\mu\right)\notag\\[4pt]
&\label{first1}\displaystyle  = 4^k \left(\max_{0\leq j\leq k} ||D^j\sigma||_{L^{\infty}}^2\right)\, ||\Pi f -f ||_{\Gamma^{k-1}}^2\,.
\end{align}

Therefore, by (\ref{second}), (\ref{first}) and (\ref{first1}), one has
\begin{align}
&\displaystyle\int_{\mathcal S}\,\langle D^k \mathcal Q(f), D^k f \rangle_{\widetilde L^2}\, d\mu \leq -\frac{\sigma_{\rm min}}{2} ||D^k(f-\Pi f)||_{\Gamma}^2 \notag\\[2pt]
&\label{RHS1}\displaystyle\qquad\qquad\qquad\qquad\qquad\qquad +\frac{4^k}{2\sigma_{\rm min}}
\left(\max_{0\leq j\leq k} ||D^j\sigma||_{L^{\infty}}^2\right)\, ||\Pi f -f ||_{\Gamma^{k-1}}^2\,.
\end{align} 
\\[2pt]

When $k=p+1$, (\ref{p+1}) and (\ref{RHS1}) read
\begin{align}
&\displaystyle \epsilon^2 \partial_t ||D^{p+1} f||_{\Gamma}^2 \leq -\sigma_{\rm min}\, ||D^{p+1}(f-\Pi f)||_{\Gamma}^2 + \frac{4^{p+1}}{\sigma_{\rm min}}
\left(\max_{0\leq j\leq k} ||D^j\sigma||_{L^{\infty}}^2\right)\, ||\Pi f -f ||_{\Gamma^{p}}^2 \notag\\[4pt]
&\label{p+1 term}\displaystyle \qquad\qquad\qquad \leq -\sigma_{\rm min} ||D^{p+1}(f-\Pi f)||_{\Gamma}^2 + \frac{4^{p+1} C_{\sigma}^2}{\sigma_{\rm min}}\, 
||\Pi f -f ||_{\Gamma^{p}}^2\,.
\end{align}
Multiplying (\ref{p0}) by $4^{p+1}C_{\sigma}^2/\sigma_{\rm min}^2$ and adding to (\ref{p+1 term}), one has
$$\epsilon^2 \partial_t \bigg(||D^{p+1} f||_{\Gamma}^2 + \sum_{j=0}^p C_{p+1,j}
||D^j f||_{\Gamma}^2 \bigg) \leq -\sigma_{\rm min}\, ||D^{p+1}(f-\Pi f)||_{\Gamma}^2\,,$$ 
where $\displaystyle C_{p+1,j}=\frac{4^{p+1}C_{\sigma}^2}{\sigma_{\rm min}^2}\, C_{p+1,j}^{\prime}\,.$
This shows that (\ref{MI}) holds for $k=p+1$. 
By Mathematical Induction, (\ref{MI}) holds for all $k\in\mathbb N$. Thus we finish the proof of Lemma \ref{L1}.  
\end{proof}
Theorem \ref{thm1} below shows that the solution $f$ will preserve the regularity of the initial data in the random space at later time, 
in the energy norm $\Gamma$. 

\begin{theorem} {\bf({Uniform\, Regularity})}
\label{thm1}
 Assume $$\sigma(v,w,z) \geq \sigma_{\rm min} >0. $$
 For some integer $m\geq 0$, $$||D^{k}\sigma||_{L^{\infty}(v,z)} \leq C_{\sigma}, \qquad\quad
 || D^{k} f_0 ||_{\Gamma} \leq C_0, \qquad k=0, \cdots, m, $$
 then the solution $f$ to (\ref{eqn}) satisfies $$|| D^{k} f ||_{\Gamma} \leq C, \qquad k=0, \cdots, m, \qquad \forall t>0, $$
 where $C_{\sigma}$, $C_0$ and $C$ are constants independent of $\epsilon$. 
 \end{theorem}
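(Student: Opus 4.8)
The plan is to obtain the theorem as an immediate consequence of Lemma \ref{L1}, exploiting the single structural fact that the right-hand side of (\ref{MI}) is \emph{non-positive} for every $k\geq 0$. For each fixed $k$ in the finite range $0\leq k\leq m$, I would introduce the composite energy functional
\begin{equation*}
E_k(t) := ||D^k f||_{\Gamma}^2 + \sum_{j=0}^{k-1} C_{kj}\, ||D^j f||_{\Gamma}^2,
\end{equation*}
where the $C_{kj}>0$ are exactly the constants furnished by Lemma \ref{L1}. Since every coefficient $C_{kj}$ is strictly positive, the trivial lower bound $||D^k f||_{\Gamma}^2 \leq E_k(t)$ holds for all $t$, so it suffices to control $E_k$.

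Lemma \ref{L1} asserts precisely that $\epsilon^2\partial_t E_k(t) \leq 0$: for $k=0$ the bound is $-2\sigma_{\rm min}||\Pi f - f||_{\Gamma}^2\leq 0$, and for $k\geq 1$ it is $-\sigma_{\rm min}||D^k(\Pi f-f)||_{\Gamma}^2\leq 0$. Dividing through by $\epsilon^2>0$ gives $\partial_t E_k(t)\leq 0$, so $E_k$ is non-increasing in time, whence $E_k(t)\leq E_k(0)$ for all $t>0$. The essential observation is that this monotonicity is completely insensitive to the size of $\epsilon$: the stiff prefactor $\epsilon^2$ multiplies the time derivative of a quantity whose rate of change already carries a favourable sign, so it never enters the resulting estimate. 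Invoking the initial hypotheses $||D^j f_0||_{\Gamma}\leq C_0$ for $j=0,\dots,m$, I would then conclude
\begin{equation*}
||D^k f||_{\Gamma}^2 \leq E_k(t) \leq E_k(0) = ||D^k f_0||_{\Gamma}^2 + \sum_{j=0}^{k-1} C_{kj}\,||D^j f_0||_{\Gamma}^2 \leq C_0^2\Big(1 + \sum_{j=0}^{k-1} C_{kj}\Big),
\end{equation*}
and take $C = C_0\,\max_{0\leq k\leq m}\big(1+\sum_{j=0}^{k-1}C_{kj}\big)^{1/2}$ over the finite index set.

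Because the conceptual content has already been absorbed into Lemma \ref{L1}, the theorem is essentially a corollary, and the only point demanding genuine care—which I expect to be the sole subtlety rather than a true obstacle—is verifying that the inherited constants $C_{kj}$ are themselves independent of the Knudsen number. Tracing their recursive construction in the proof of that lemma, where $C_{p+1,j}=\tfrac{4^{p+1}C_{\sigma}^2}{\sigma_{\rm min}^2}\,C_{p+1,j}^{\prime}$ and the $C_{p+1,j}^{\prime}$ are assembled only from the lower-order coefficients, one sees that they depend exclusively on $k$, on $\sigma_{\rm min}$, and on the kernel bound $C_{\sigma}$, but never on $\epsilon$. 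This $\epsilon$-independence is exactly what promotes a classical energy estimate to the desired \emph{uniform} regularity, and it is the payoff of having dispatched the stiff transport term $\epsilon\,v\cdot\nabla_x(D^k f)$ through the skew-symmetric periodic-boundary cancellation rather than through any estimate that would reintroduce a power of $\epsilon$.
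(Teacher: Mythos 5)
Your proposal is correct and is essentially identical to the paper's own proof: both deduce from Lemma \ref{L1} that the composite energy $||D^k f||_{\Gamma}^2 + \sum_{j=0}^{k-1} C_{kj}||D^j f||_{\Gamma}^2$ is non-increasing in time, then bound it by its initial value and invoke the hypotheses on $f_0$, with the $\epsilon$-independence of the $C_{kj}$ carried over from the lemma. No gaps to report.
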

 \begin{proof}
According to Lemma \ref{L1}, one has
$$ \partial_t \bigg( ||D^k f||_{\Gamma}^2 + \sum_{j=0}^{k-1} C_{kj}||D^j f||_{\Gamma}^2 \bigg) \leq 0, \qquad C_{kj}>0, \, k\in\mathbb N, $$
which gives 
\begin{align*}
&\displaystyle ||D^k f||_{\Gamma}^2 \leq ||D^k f||_{\Gamma}^2 + \sum_{j=0}^{k-1} C_{kj}||D^j f||_{\Gamma}^2 \\[4pt]
&\displaystyle\qquad\qquad\leq||D^k f_0||_{\Gamma}^2 + \sum_{j=0}^{k-1} C_{kj}||D^j f_0||_{\Gamma}^2 \leq C_0^2 \,(1+\sum_{j=0}^{k-1} C_{kj}):= C^2\,,
\end{align*}
where $C$ is independent of $\epsilon$. This completes the proof of the theorem. 
\end{proof}
\begin{remark}
If we consider the linear semiconductor Boltzmann equation with random inputs and external electric potential
 \begin{align}
&\displaystyle \epsilon \partial_t f + v\cdot\nabla_x f +\nabla_x\phi\cdot\nabla_v f  = \frac{1}{\epsilon}\mathcal Q(f), \notag \\[4pt]
&\label{semi}\displaystyle \mathcal Q(f) = \int_{\mathbb R^d} \sigma(v,w,z) \left(M(v)f(w,z) - M(w)f(v,z)\right) dw, 
\end{align}
where the electric potential $\phi=\phi(t,x)$ is given a priori and does not depend on $z$. 
By simply changing $d\mu$ in the proof above to $\displaystyle d\mu=\frac{e^{-\phi}}{M(v)}\, dvdx\pi(z)dz$, one can reach the same result as 
Theorem \ref{thm1}--the uniform regularity of $f$ in the random space. 
However, proving the {\it uniform convergence} of the stochastic Galerkin method for (\ref{semi}) is more complicated and remains a further investigation. 
\end{remark}
 
\subsection{Regularity of $v\cdot\nabla_x f$}
\label{vf}

Differed from the proof in \cite{JinLiuMa-2016} of the estimate on $\Pi f-f$, 
we need to overcome the difficulty to get the regularity of 
$v\cdot\nabla_x f$ in the random space, which is of exponential decay. 
In particular, in the proof of Lemma $4.2$ in \cite{JinLiuMa-2016}, thanks to the
boundedness of $v$, one directly gets $||D^k(v\cdot\nabla_x f)||_{\Gamma} \leq ||D^k (\nabla_x f)||_{\Gamma}$. 

Nevertheless, $v\in\mathbb R^d$ in our problem under study, the above inequality is no longer valid, 
thus a new estimate for $|| D^k (v\cdot\nabla_x f)||_{\Gamma}$ is needed. 
This is the main purpose of the current subsection. 

Firstly, one needs the following assumptions for the collision kernel $\sigma$:
\begin{assumption}
\begin{equation}
\label{assump1}\int_{\mathbb R^d}\int_{\mathbb R^d}\, (D^j \sigma)^2 v^2 M(v)M(w)dwdv\leq \widehat C_{\sigma}^2, \qquad\text{for  }\,  0\leq j\leq k\,,
\end{equation}
\end{assumption}

\begin{assumption}
\begin{equation}\label{assump3} \left|\int_{\mathbb R^d}\, (D^j \sigma) M(w)dw\right|\leq \lambda_1, \qquad\text{for  }\,  1\leq j\leq k\,,
\end{equation}
\end{assumption}
Here $\widehat C_{\sigma}$ and $\lambda_1$ are positive constants. 
Note that when $j=0$ in Assumption $1$, the exact same assumption is used in \cite{RSZ} for the deterministic problem. 
Since $v\cdot\nabla_x f$ and $vf$ satisfy the same equation, $||D^k (vf)||_{\Gamma}$ is estimated for notational simplicity. 
We first prove Lemma \ref{L2}, which will serve as a tool to obtain the main result of this subsection given by Theorem \ref{thm3}. 

\begin{lemma}
\label{L2}
There exist $k$ constants $c_{kj}>0$ for $j=0, \cdots, k-1$ 
and $k+1$ constants $s_{kj}>0$ for $j=0, \cdots, k$, such that 
\begin{align}
&\displaystyle\quad\epsilon^2 \partial_t \bigg(||D^k (vf)||_{\Gamma}^2 + \sum_{j=0}^{k-1} c_{kj}||D^j (vf)||_{\Gamma}^2 \bigg) \notag\\[4pt]
&\label{MI2}\displaystyle\leq 
\begin{cases} \displaystyle -2\sigma_{\rm min}\, ||vf||_{\Gamma}^2 + 2\widehat C_{\sigma}C\, ||vf||_{\Gamma}\,,\qquad\qquad\qquad\quad k=0, \\[6pt]
                    \displaystyle -\sigma_{\rm min} \sum_{j=0}^k ||D^j (vf)||_{\Gamma}^2 + \sum_{j=0}^{k} s_{kj}||D^j (vf)||_{\Gamma}, \qquad  k\geq 1. 
                    \end{cases}
                \end{align}
\end{lemma}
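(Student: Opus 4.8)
The plan is to prove (\ref{MI2}) by induction on $k$, mirroring the structure of Lemma~\ref{L1}; the genuinely new ingredient is a \emph{gain--loss} splitting of the collision term that replaces the trivial bound $||D^k(v\cdot\nabla_x f)||_\Gamma\le ||D^k\nabla_x f||_\Gamma$ used in \cite{JinLiuMa-2016}, which fails here because $v\in\mathbb R^d$ is unbounded. Multiplying the scaled equation $\epsilon^2\partial_t f+\epsilon\,v\cdot\nabla_x f=\mathcal Q(f)$ by $v$ and using that $v$ is independent of $x$ gives
$$\epsilon^2\partial_t(vf)+\epsilon\,v\cdot\nabla_x(vf)=v\,\mathcal Q(f);$$
since $\nabla_x$ commutes with $\mathcal Q$, the function $v\cdot\nabla_x f$ solves an equation of identical form with source $v\,\mathcal Q(\nabla_x f)$, which is why it suffices to estimate $vf$ and afterwards replace $f$ by $\nabla_x f$ (both obey the uniform bounds of Theorem~\ref{thm1}).

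For the base case $k=0$ I would pair the $vf$-equation with $vf$ in $\widetilde L^2$, divide by $M(v)$, and integrate over $\mathcal S\times\mathbb R^d$ against $d\mu$. The transport term vanishes by periodicity exactly as in (\ref{p+1}). The key step is to split $\langle v\mathcal Q(f),vf\rangle_{\widetilde L^2}=\int_{\mathbb R^d}v^2\,\mathcal Q(f)\,(f/M)\,dv$ into a loss and a gain part. Writing $\lambda(v,z)=\int_{\mathbb R^d}\sigma(v,w,z)M(w)\,dw\ge\sigma_{\rm min}$ (since $\int M\,dw=1$), the loss part is $-\int_{\mathbb R^d}\lambda\,(vf)^2/M\,dv\le-\sigma_{\rm min}||vf||_{\widetilde L^2}^2$, which supplies the dissipation. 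For the gain part $\int\!\int v\,(vf)(v)\,\sigma\,f(w)\,dw\,dv$, Cauchy--Schwarz in $v$ followed by Cauchy--Schwarz in $w$ together with Assumption~1 bound it by $\widehat C_\sigma||f||_{\widetilde L^2}||vf||_{\widetilde L^2}$; integrating over $\mathcal S$ and invoking $||f||_\Gamma\le C$ from Theorem~\ref{thm1} yields the source $\widehat C_\sigma C\,||vf||_\Gamma$. Multiplying the energy identity by $2$ gives the $k=0$ line of (\ref{MI2}).

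For the inductive step I would apply $D^k$, use the Leibniz expansion $D^k\mathcal Q(f)=\mathcal Q(D^kf)+\sum_{j=0}^{k-1}\binom{k}{j}\widetilde{\mathcal Q}_{kj}(D^jf)$ from the proof of Lemma~\ref{L1}, multiply by $v$, and pair with $vD^kf=D^k(vf)$. The diagonal term $\langle v\mathcal Q(D^kf),vD^kf\rangle$ reproduces the base-case computation with $f$ replaced by $D^kf$, producing $-\sigma_{\rm min}||D^k(vf)||_\Gamma^2+\widehat C_\sigma C\,||D^k(vf)||_\Gamma$ (again via $||D^kf||_\Gamma\le C$). Each commutator term $\langle v\widetilde{\mathcal Q}_{kj}(D^jf),vD^kf\rangle$ splits the same way: its gain piece is controlled by Assumption~1 and Theorem~\ref{thm1}, contributing a linear source $s_{kj}||D^k(vf)||_\Gamma$, while its loss piece $\int_{\mathbb R^d}v^2\big(\int D^{k-j}\sigma\,M(w)\,dw\big)\,D^jf\,D^kf/M\,dv$ is bounded, using Assumption~2 (here $k-j\ge1$), by $\lambda_1||D^j(vf)||_\Gamma||D^k(vf)||_\Gamma$.

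Finally I would close the induction as in Lemma~\ref{L1}. The quadratic cross terms $\lambda_1||D^j(vf)||_\Gamma||D^k(vf)||_\Gamma$, $j\le k-1$, are handled by Young's inequality: the top-order part is absorbed into the dissipation $-\sigma_{\rm min}||D^k(vf)||_\Gamma^2$, and the leftover lower-order squares $||D^j(vf)||_\Gamma^2$ are cancelled by adding a sufficiently large multiple of the combined lower-order estimate provided by the induction hypothesis (the analogue of (\ref{p0}), which carries matching dissipation $-\sigma_{\rm min}\sum_{j\le p}||D^j(vf)||_\Gamma^2$). The surviving gain contributions assemble into the linear sources $\sum_{j=0}^k s_{kj}||D^j(vf)||_\Gamma$, yielding (\ref{MI2}) at $k=p+1$ with new constants $c_{kj},s_{kj}>0$. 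The main obstacle is exactly the unbounded velocity: the weight $v^2$ must be absorbed by the collision kernel rather than by $v$ itself, and Assumptions~1 and~2 are precisely what render the gain integrals and the off-diagonal loss integrals finite and $\epsilon$-independent, whereas in the bounded-velocity setting of \cite{JinLiuMa-2016} these estimates were automatic.
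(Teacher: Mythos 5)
Your proposal is correct and follows essentially the same route as the paper's proof: the same gain--loss splitting of $\langle v\mathcal Q(f),vf\rangle_{\widetilde L^2}$ with Assumption 1 controlling the gain and $\int\sigma M(w)\,dw\ge\sigma_{\rm min}$ supplying the dissipation, the same Leibniz decomposition into diagonal and commutator pieces with Assumption 2 handling the off-diagonal loss, and the same closing of the induction by absorbing the top-order cross term via Young's inequality and adding a large multiple of the summed lower-order inequality. No gaps.
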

                
\begin{proof}
We prove it by using Mathematical Induction. We first prove the result for $k=0$. 
Multiply by $v$ to both sides of (\ref{eqn}), 
\begin{equation}\label{veqn}\epsilon^2 \partial_t (vf) + \epsilon v\cdot \nabla_x (vf) = v\mathcal Q(f).\end{equation}
One multiplies by $v f$, divides by $M(v)$ to both sides of (\ref{veqn}) and integrates on $\Omega\times\mathbb R^d\times I_z$, then 
\begin{equation}\label{T}\frac{\epsilon^2}{2}\partial_t || vf||_{\Gamma}^2 = \int_{\mathcal S}\int_{\mathbb R^d}\frac{v^2\mathcal Q(f) f(v,z)}{M(v)}\, dvd\mu.
\end{equation}
Denote $\mathcal T=\mathbb R^d\times \mathbb R^d$. The right-hand-side of (\ref{T}) is given by 
\begin{align*}
&\displaystyle\text{RHS}=\underbrace{\int_{\mathcal S}\int_{\mathcal T}v^2 \sigma f(v,z)f(w,z)\, dwdvd\mu}_{\textcircled{a}}
\underbrace{-\int_{\mathcal S}\int_{\mathcal T}\frac{v^2}{M(v)}\sigma M(w)f^2(v,z)\, dwdvd\mu}_{\textcircled{b}}
\end{align*}
By the Cauchy-Schwartz inequality, 
\begin{align}
&\displaystyle \textcircled{a}=\int_{\mathcal S}\int_{\mathcal T}v\sigma\sqrt{M(v)}\sqrt{M(w)}\,
\frac{vf(v,z)}{\sqrt{M(v)}}\frac{f(w,z)}{\sqrt{M(w)}}\, dwdvd\mu \notag\\[4pt]
&\displaystyle \quad\leq \int_{\mathcal S}\left(\int_{\mathcal T} v^2\sigma^2 M(v)M(w)\, dwdv\right)^{\frac{1}{2}}\,
\left(\int_{\mathcal T}\frac{v^2 f^2(v,z)}{M(v)}\, \frac{f^2(w,z)}{M(w)}\, dwdv\right)^{\frac{1}{2}} d\mu \notag\\[4pt]
&\displaystyle \quad \leq \left|\left|\, \bigg(\int_{\mathcal T} v^2\sigma^2 M(v)M(w)\, dwdv \bigg)^{\frac{1}{2}}\, \right|\right|_{L^{\infty}}\, \int_{\mathcal S} ||vf||_{\widetilde L^2}\, 
||f||_{\widetilde L^2}\, d\mu \notag\\[4pt]
&\label{aa}\displaystyle  \quad\leq \widehat C_{\sigma}\, \left(\int_{\mathcal S}||vf||_{\widetilde L^2} ^2 \, d\mu \right)^{\frac{1}{2}}\, 
\left(\int_{\mathcal S}||f||_{\widetilde L^2} ^2 \, d\mu \right)^{\frac{1}{2}}
=\widehat C_{\sigma}\, ||vf||_{\Gamma}\, ||f||_{\Gamma}\,.
\end{align}
Also, 
\begin{align}
\label{bb}
&\displaystyle \textcircled{b} = -\int_{\mathcal S}\int_{\mathbb R^d}\left(\int_{\mathbb R^d}\sigma M(w)dw\right) \frac{v^2 f^2(v,z)}{M(v)}\, dvd\mu \leq -\sigma_{\rm min} \, ||vf||_{\Gamma}^2\,.
\end{align}
Combining (\ref{aa}) and (\ref{bb}), one gets 
\begin{equation}\label{vf0} 
\epsilon^2 \partial_t ||vf||_{\Gamma} \leq  - \sigma_{\rm min}\, ||vf||_{\Gamma}+ \widehat C_{\sigma}||f||_{\Gamma}\,.
\end{equation}
According to Theorem \ref{thm1}, $||f||_{\Gamma}$ is uniformly bounded. 
By Gronwall's inequality, one then has
$$||vf||_{\Gamma} \leq e^{-\frac{\sigma_{\rm min} t}{\epsilon^2}}\, ||vf_0||_{\Gamma}+C^{\prime}\,.$$
\\[2pt]

We now look at the case where $k\geq 1$. Take $D^k$ on both sides of (\ref{veqn}), 
$$\epsilon^2 (D^k vf) + v\cdot\nabla_x (D^k vf) = v D^k \mathcal Q(f), $$
Taking a scalar product with $D^k (vf)$, dividing by $M(v)$ and integrating on $\Omega\times\mathbb R^d\times I_z$, one has
\begin{equation}
\label{veqn2}\frac{\epsilon^2}{2}||D^k vf||_{\Gamma}^2= \underbrace{\int_{\mathcal S}\int_{\mathbb R^d}\frac{v^2 D^k \mathcal Q(f)\, D^k f(v,z)}{M(v)}\, dvd\mu}
_{\text{RHS}}
\end{equation}
where 
\begin{align*}&\displaystyle\text{RHS} = \int_{\mathcal S}\int_{\mathcal T}\, \sum_{j=0}^k \binom{k}{j} v^2 D^{k-j}\sigma\, D^j f(w,z) D^k f(v,z)\, dwdvd\mu \\[4pt]
&\displaystyle \qquad\quad-\int_{\mathcal S}\int_{\mathcal T}\, \sum_{j=0}^k \binom{k}{j} v^2 D^{k-j}\sigma M(w)\frac{D^j f(v,z) D^k f(v,z)}{M(v)}\, dwdvd\mu \\[4pt]
&\displaystyle \qquad= \underbrace{\int_{\mathcal S}\int_{\mathcal T}\, v^2 \sigma D^k f(w,z) D^k f(v,z)\, dwdvd\mu}_{\textcircled{c}}  
\underbrace{- \int_{\mathcal S}\int_{\mathcal T} \sigma M(w) \frac{(v D^k f(v,z))^2}{M(v)}\, dwdvd\mu}_{\textcircled{d}} \\[4pt]
&\displaystyle \qquad\quad+\underbrace{\int_{\mathcal S}\int_{\mathcal T}\,  \sum_{j=0}^{k-1} \binom{k}{j} v^2 D^{k-j}\sigma\, D^j f(w,z) D^k f(v,z)\, dwdvd\mu}_{\textcircled{e}} \\[4pt]
&\displaystyle \qquad\quad\underbrace{-\int_{\mathcal S}\int_{\mathcal T}\, \sum_{j=0}^{k-1} \binom{k}{j} v^2 D^{k-j}\sigma M(w)\frac{D^j f(v,z) D^k f(v,z)}{M(v)}\, dwdvd\mu}_{\textcircled{f}}\,.
\end{align*}
By arguments similar to (\ref{aa}), (\ref{bb}) and the uniform regularity of $f$ given by Theorem \ref{thm1}, 
one directly has $$ \textcircled{c} \leq \widehat C_{\sigma} ||D^k (vf)||_{\Gamma}\, ||D^k f||_{\Gamma}\leq C \widehat C_{\sigma}\, ||D^k (vf)||_{\Gamma}\,, \qquad
\textcircled{d} \leq -\sigma_{\rm min} \, ||D^k (vf)||_{\Gamma}^2\,.$$
Now we estimate $\textcircled{e}$ and $\textcircled{f}$:
\begin{align*}
&\displaystyle \textcircled{e} \leq \sum_{j=0}^{k-1}\binom{k}{j} \int_{\mathcal S}\left(\int_{\mathcal T} v^2 (D^{k-j}\sigma)^2 M(v)M(w)\, dwdv \right)^{\frac{1}{2}}
\\[4pt]
&\displaystyle\qquad\cdot\left(\int_{\mathcal T} \frac{(D^j f(w,z))^2}{M(w)}\, \frac{(v D^k f(v,z))^2}{M(v)}\, dwdv\right)^{\frac{1}{2}}d\mu
\leq\sum_{j=0}^{k-1}\binom{k}{j} \widehat C_{\sigma}\, ||D^j f||_{\Gamma}\, ||D^k (vf)||_{\Gamma}\\[4pt]
&\displaystyle\quad\leq C \widehat C_{\sigma} \sum_{j=0}^{k-1}\binom{k}{j}\, ||D^k (vf)||_{\Gamma}\leq 2^k C \widehat C_{\sigma}\, ||D^k (vf)||_{\Gamma}\,,
\end{align*}
where we used Theorem \ref{thm1} in the last inequality. By Young's inequality, 
\begin{align}\label{f}&\displaystyle \textcircled{f}  \leq \frac{\sigma_{\rm min}}{2}||D^k (vf)||_{\Gamma}^2 + \frac{1}{2\sigma_{\rm min}}
\bigg|\bigg|\, \sum_{j=0}^{k-1}\binom{k}{j} \int_{\mathbb R^d}\, (D^{k-j}\sigma) M(w)dw\, D^j (vf)\, \bigg|\bigg|_{\Gamma}^2\,,
\end{align}
then using the Cauchy Schwartz inequality, 
\begin{align*}
&\displaystyle\quad
\bigg|\bigg|\, \sum_{j=0}^{k-1}\binom{k}{j} \int_{\mathbb R^d}\, (D^{k-j}\sigma) M(w)dw\, D^j (vf)\, \bigg|\bigg|_{\Gamma}^2 \\[4pt]
&\displaystyle \leq 
\left(\, \sum_{j=0}^{k-1}\binom{k}{j}^2 \max_{0\leq j\leq k}\bigg|\bigg|\int_{\mathbb R^d}\, (D^j \sigma) M(w)dw\, \bigg|\bigg|_{L^{\infty}}^2 \right)\left(\, \sum_{j=0}^{k-1} ||D^j (vf)||_{\Gamma}^2
\right)\leq 4^k \lambda_1^2\, ||vf||_{H^{k-1}}^2.
\end{align*}
Thus (\ref{f}) gives $$\textcircled{f}\leq\frac{\sigma_{\rm min}}{2}||D^k (vf)||_{\Gamma}^2 + \frac{4^k\lambda_1^2}{2\sigma_{\rm min}}\, ||vf||_{H^{k-1}}^2\,.$$
Sum up $\textcircled{c}, \textcircled{d}, \textcircled{e}, \textcircled{f}$, one gets
\begin{equation}\label{keqn}\frac{\epsilon^2}{2}\partial_t ||D^k (vf)||_{\Gamma}^2 \leq  -\frac{\sigma_{\rm min}}{2}||D^k (vf)||_{\Gamma}^2 +
\widetilde C\, ||D^k (vf)||_{\Gamma}  +  \frac{4^{k}\lambda_1^2}{2\sigma_{\rm min}}\, ||vf||_{H^{k-1}}^2\,,\end{equation}
where $\widetilde C= (2^k+1) C \widehat C_{\sigma}$. 

Assume that for any $k\leq p$, where $p\in\mathbb N$, the conclusion (\ref{MI2}) holds. Adding these inequalities together, 
\begin{align}\label{k1}&\displaystyle\quad
\epsilon^2 \partial_t \bigg(\, \sum_{i=0}^p ||D^i (vf)||_{\Gamma}^2 + 
\sum_{i=1}^p \sum_{j=0}^{i-1}c_{ij} || D^j (vf)||_{\Gamma}^2\, \bigg)\notag\\[4pt]
&\displaystyle\leq-\sigma_{\rm min} \sum_{i=1}^{p}\sum_{j=0}^{i}||D^j (vf)||_{\Gamma}^2 + \sum_{i=1}^{p}\sum_{j=0}^{i}s_{ij}||D^j (vf)||_{\Gamma}
-2\sigma_{\rm min}||vf||_{\Gamma}^2 + 2\widehat C_{\sigma}C\, ||vf||_{\Gamma}, 
\end{align}
which is equivalent to
\begin{equation}\label{p}\epsilon^2 \partial_t \bigg(\, \sum_{j=0}^p c_{p+1,j}^{\prime} ||D^j f||_{\Gamma}^2\, \bigg)\leq 
- \sum_{j=0}^p \gamma_{p+1,j} ||D^j (vf)||_{\Gamma}^2 +\sum_{j=0}^p s_{p+1,j}^{\prime} ||D^j(vf)||_{\Gamma}\,,
\end{equation}
where 
\begin{align*}
c_{p+1,j}^{\prime} =
\begin{cases} 
                         1 + \sum_{i=1}^p c_{i j}, \qquad 0\leq j\leq p-1, \\[4pt]       
                        1, \qquad\qquad\qquad j=p,           
\end{cases}
\end{align*}
\begin{minipage}{.5\linewidth}
\begin{align*}
\displaystyle
\gamma_{p+1,j} =
\begin{cases}   \left(2+\frac{(p+1)p}{2}\right)\sigma_{\rm min}, \qquad j=0, \\[4pt]
                         \frac{(p+1)p}{2}\, \sigma_{\rm min}, \qquad\quad1\leq j\leq p,         
\end{cases}
\end{align*}
\end{minipage} 
\begin{minipage}{.5\linewidth}
\begin{align*}
\displaystyle
s_{p+1,j}^{\prime} =
\begin{cases}    2\widehat C_{\sigma}C +\sum_{i=1}^p s_{i0}, \quad j=0, \\[4pt]
                         \sum_{i=1}^p s_{ij}, \qquad\quad 1\leq j\leq p. 
\end{cases}
\end{align*}
\end{minipage} 
\\[20pt]

When $k=p+1$, (\ref{keqn}) gives 
\begin{equation}\label{p1} \epsilon^2\partial_t ||D^{p+1} (vf)||_{\Gamma}^2 \leq  -\sigma_{\rm min}\, ||D^{p+1} (vf)||_{\Gamma}^2 +
2\widetilde C\, ||D^{p+1} (vf)||_{\Gamma}  +  \frac{4^{p+1}\lambda_1^2}{\sigma_{\rm min}}\, ||vf||_{H^p}^2\,. 
\end{equation}
\\[2pt]
Multiplying (\ref{p}) by $\chi_{p+1}$(which is positive and defined below) and adding with (\ref{p1}) gives
\begin{align}
&\displaystyle\epsilon^2 \partial_t \bigg( ||D^{p+1}(vf)||_{\Gamma}^2
 + \sum_{j=0}^p c_{p+1,j}^{\prime\prime}\, ||D^j (vf)||_{\Gamma}^2 \bigg) \leq 
-\sigma_{\rm min}\, \sum_{j=0}^{p+1} ||D^j (vf)||_{\Gamma}^2 \notag\\[4pt]
&\label{p2}\displaystyle \qquad\qquad\qquad\qquad\qquad\qquad\qquad\qquad\qquad\qquad\quad
 +\sum_{j=0}^{p+1} s_{p+1,j}^{\prime\prime}\, ||D^j (vf)||_{\Gamma}\,,
\end{align}
where \\[4pt]
\begin{minipage}{.28\linewidth}
\begin{align*}\chi_{p+1}= \frac{\frac{4^{p+1}\lambda_1^2}{\sigma_{\rm min}}+\sigma_{\rm min}}{\min_{0\leq j\leq p}\{\gamma_{p+1,j}\}}\,,
\end{align*}
\end{minipage}
\begin{minipage}{.28\linewidth}
$$ c_{p+1,j}^{\prime\prime} = c_{p+1,j}^{\prime}\,\chi_{p+1}\,,$$
\end{minipage}
\begin{minipage}{.28\linewidth}
\begin{align*}
\displaystyle
s_{p+1,j}^{\prime\prime} =
\begin{cases}
2\widetilde C,\qquad\qquad j=p+1,  \\[4pt]
s_{p+1,j}^{\prime}\,\chi_{p+1}, \,  0\leq j\leq p\,. 
\end{cases}
\end{align*}
\end{minipage}
This shows that (\ref{MI2}) still holds for $k=p+1$. By Mathematical Induction, conclusion (\ref{MI2}) holds for all $k\in\mathbb N$. Thus 
we finish the proof of Lemma \ref{L2}. 
\end{proof}
The following theorem provides a new estimate on the regularity of $v\cdot\nabla_x f$, which is of exponential decay. 
\begin{theorem}
\label{thm3}
If assumptions for the collision kernel, namely (\ref{sigma}), (\ref{assump1}) and (\ref{assump3}) are satisfied  
and if for some integer $m\geq 0$, $$||D^k (vf_0)||_{\Gamma}\leq C_0, \qquad
||D^k (v\cdot\nabla_x f_0)||_{\Gamma}\leq C_1, \qquad k=0, \cdots, m, $$
then the following regularity results of $v f$ and $v\cdot\nabla_x f$ hold:
$$||D^k (vf)||_{\Gamma}\leq \widetilde C_0 (e^{-\frac{C_2 t}{\epsilon^2}}+1),  \qquad k=0, \cdots, m, $$
and
$$||D^k (v\cdot\nabla_x f)||_{\Gamma}\leq \widetilde C_1 (e^{-\frac{C_2  t}{\epsilon^2}}+1), \qquad k=0, \cdots, m,$$
where $C_0$, $C_1$, $\widetilde C_0$, $\widetilde C_1$ and $C_2$ are constants independent of $\epsilon$. 
\end{theorem}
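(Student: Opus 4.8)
The plan is to feed the differential inequality of Lemma \ref{L2} into a Gronwall argument, after reducing the mixed quadratic/linear right-hand side to a single \emph{linear} ODE for the square root of a suitable energy. For $k=0$ the work is essentially already done inside Lemma \ref{L2}: estimate (\ref{vf0}) together with the uniform bound $\|f\|_\Gamma \le C$ from Theorem \ref{thm1} gives $\epsilon^2 \partial_t \|vf\|_\Gamma \le -\sigma_{\rm min}\|vf\|_\Gamma + \widehat C_\sigma C$, and Gronwall yields $\|vf\|_\Gamma \le e^{-\sigma_{\rm min} t/\epsilon^2}\|vf_0\|_\Gamma + \widehat C_\sigma C/\sigma_{\rm min}$, which is already of the claimed form with rate $C_2=\sigma_{\rm min}$.

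For $k \ge 1$ I would work with the energy $E_k := \|D^k(vf)\|_\Gamma^2 + \sum_{j=0}^{k-1} c_{kj}\|D^j(vf)\|_\Gamma^2$ appearing on the left of (\ref{MI2}). Since every constant $c_{kj}>0$ produced by Lemma \ref{L2} is independent of $\epsilon$, $E_k$ is equivalent to $\sum_{j=0}^k \|D^j(vf)\|_\Gamma^2$: there are $\epsilon$-independent constants $0 < c_{\min} \le C_*$ with $c_{\min}\sum_{j=0}^k \|D^j(vf)\|_\Gamma^2 \le E_k \le C_* \sum_{j=0}^k \|D^j(vf)\|_\Gamma^2$, where $c_{\min}=\min(1,\min_j c_{kj})$ and $C_*=\max(1,\max_j c_{kj})$. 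Using the lower equivalence to absorb the dissipative term and Cauchy--Schwarz with the upper equivalence on the linear term, $\sum_{j=0}^k s_{kj}\|D^j(vf)\|_\Gamma \le S\,(\sum_j \|D^j(vf)\|_\Gamma^2)^{1/2}$ with $S=(\sum_j s_{kj}^2)^{1/2}$, the right side of (\ref{MI2}) collapses to $\epsilon^2 \partial_t E_k \le -\frac{\sigma_{\rm min}}{C_*} E_k + \frac{S}{\sqrt{c_{\min}}}\sqrt{E_k}$.

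Setting $u=\sqrt{E_k}$ and dividing by $2u$ turns this into the linear inequality $\epsilon^2 \partial_t u \le -\frac{\sigma_{\rm min}}{2C_*} u + \frac{S}{2\sqrt{c_{\min}}}$ (the apparent singularity at $u=0$ being handled by the standard $\sqrt{E_k+\delta}$ regularization followed by $\delta\to 0$). Gronwall then gives $u(t) \le e^{-\sigma_{\rm min} t/(2C_*\epsilon^2)} u(0) + \frac{C_* S}{\sigma_{\rm min}\sqrt{c_{\min}}}$, and since $\|D^k(vf)\|_\Gamma \le \sqrt{E_k}=u$ while $u(0)=\sqrt{E_k(0)} \le C_0\sqrt{1+\sum_j c_{kj}}$ is controlled by the initial data, the first bound follows with $C_2=\min_{0\le k\le m}\{\sigma_{\rm min},\,\sigma_{\rm min}/(2C_*)\}$ and $\widetilde C_0$ the larger of the initial constant and the steady level.

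For the estimate on $v\cdot\nabla_x f$ I would exploit the observation recorded just before Lemma \ref{L2}: because the coefficients of (\ref{eqn}) are independent of $x$, $\nabla_x f$ solves the same kinetic equation as $f$, and $v\cdot\nabla_x f$ plays exactly the role of $vf$. Hence Lemma \ref{L2} and the Gronwall argument above apply verbatim with $f$ replaced by $\nabla_x f$, with the uniform bounds of Theorem \ref{thm1} invoked for $\nabla_x f$ and the initial constant $C_1$ in place of $C_0$, yielding the second estimate with the same rate $C_2$. I expect the main obstacle to be the middle step: converting the Lemma \ref{L2} inequality, whose right side pairs a negative quadratic form against a positive linear form, into a clean linear Gronwall inequality with a decay rate that does \emph{not} degenerate as $\epsilon\to 0$. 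This rests entirely on the norm equivalence between $E_k$ and $\sum_j\|D^j(vf)\|_\Gamma^2$ holding with $\epsilon$-independent constants, and on verifying that both the resulting steady level and the rate $C_2$ stay bounded away from their bad limits; all of this is guaranteed precisely because the constants $c_{kj}$ and $s_{kj}$ delivered by Lemma \ref{L2} are themselves independent of $\epsilon$.
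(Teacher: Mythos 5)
Your proposal is correct and follows essentially the same route as the paper: the paper's weighted energy $\|vf\|_{\widehat\Gamma^k}^2=\sum_{j=0}^k \widetilde c_{k+1,j}''\|D^j(vf)\|_\Gamma^2$ is exactly your $E_k$, and the paper likewise reduces (\ref{MI2}) to $\epsilon^2\partial_t\|vf\|_{\widehat\Gamma^k}^2\le -\widetilde\sigma_{\rm min}\|vf\|_{\widehat\Gamma^k}^2+\widetilde C_k\|vf\|_{\widehat\Gamma^k}$, cancels a factor of $\|vf\|_{\widehat\Gamma^k}$, applies Gronwall, and then treats $v\cdot\nabla_x f$ by noting that $\nabla_x f$ solves the same equation. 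Your explicit $\sqrt{E_k+\delta}$ regularization is a small added care the paper glosses over, but the argument is the same.
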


\begin{proof}
Define the weighted energy norm:
$$||vf||_{\widehat \Gamma^k}^2: = \sum_{j=0}^{k} \widetilde c_{k+1,j}^{\prime\prime}\, ||D^j (vf)||_{\Gamma}^2\,,$$
where 
\begin{align*}
\widetilde c_{k+1,j}^{\prime\prime}=
\begin{cases}
1, \qquad\quad j=k, \\[4pt]
c_{k,j}^{\prime\prime}, \qquad 0\leq j\leq k-1. 
\end{cases}
\end{align*}

The second term in (\ref{MI2}) has the estimate 
\begin{equation}\label{L2a}\sum_{j=0}^{k} s_{kj} ||D^j (vf)||_{\Gamma} \leq \bigg(\sum_{j=0}^k \frac{(s_{kj})^2}{ \widetilde c_{k+1,j}^{\prime\prime}}\bigg)^{\frac{1}{2}}\bigg(\sum_{j=0}^k \widetilde c_{k+1,j}^{\prime\prime}\, ||D^j(vf)||_{\Gamma}^2\bigg)^{\frac{1}{2}}=\widetilde C_k\, ||vf||_{\widehat \Gamma^k}\,,
\end{equation}
where the Cauchy Schwartz inequality is used, 
and the constant $\displaystyle \widetilde C_k^2 =\sum_{j=0}^k \frac{(s_{kj})^2}{\widetilde c_{k+1,j}^{\prime\prime}}$. 
The first term in (\ref{p2}) is estimated by
\begin{equation}\label{L2b} -\sigma_{\rm min}\, \sum_{j=0}^k ||D^j (vf)||_{\Gamma}^2 \leq -\frac{\sigma_{\rm min}}{\max_{0\leq j\leq k}\{\widetilde c_{k+1,j}^{\prime\prime}\}}
\, ||vf||_{\widehat \Gamma^k}^2\,.
\end{equation}
Therefore, according to Lemma \ref{L2} and (\ref{L2a}), (\ref{L2b}), 
$$\epsilon^2 \partial_t ||vf||_{\widehat \Gamma^k}^2 \leq -\widetilde \sigma_{\rm min} ||vf||_{\widehat \Gamma^k}^2 +\widetilde C_k\, ||vf||_{\widehat \Gamma^k}\,,$$
where $\displaystyle\widetilde\sigma_{\rm min} =\frac{\sigma_{\rm min}}{\max_{0\leq j\leq k}\{\widetilde c_{k+1,j}^{\prime\prime}\}}$. 
Cancel $||vf||_{\widehat \Gamma^k}$ on both sides and use Gronwall's inequality, 
$$ ||D^j (vf)||_{\Gamma}\lesssim ||vf||_{\widehat \Gamma^k}\leq  \widetilde C_0 (e^{-\frac{C_2 t}{\epsilon^2}}+1), $$
for $j=0, \cdots, k$, where $C_2=\widetilde\sigma_{\rm min}/2$ and $\widetilde C_0$ are constants independent of $\epsilon$. 

Notice that $\nabla_x f$ and $f$ satisfy the same equation, under the assumptions given in Theorem \ref{thm3}, one consequently has
$$||D^k (v\cdot\nabla_x f)||_{\Gamma} \leq \widetilde C_1 (e^{-\frac{C_2 t}{\epsilon^2}}+1), $$
where $C_1$, $C_2$ are independent of $\epsilon$. This completes the proof.  \\[2pt]
\end{proof}

\subsection{Regularity of $\Pi f-f$}
\label{Pi_f}

Our goal of this subsection is to obtain a regularity estimate on $\Pi f-f$, as shown in Theorem \ref{thm2} below. 
\begin{theorem} {\bf({Estimate on $\Pi f-f$})}
\label{thm2}
If all the assumptions in Theorem \ref{thm1} and Theorem \ref{thm3} are satisfied,  
then the regularity of $\Pi f -f$ is given by 
\begin{align}
&\label{thm2a}\displaystyle ||D^k (\Pi f -f)||_{\Gamma}^2  \leq e^{-\sigma_{\rm min}t/2\epsilon^2}\, ||D^k (\Pi f_0 - f_0)||_{\Gamma}^2 + C^{\prime}\epsilon^2 
\leq C^{\prime\prime}\epsilon^2, 
\end{align}
for any $t\in (0, T]$ and $0\leq k\leq m$, where $C^{\prime}$ and $C^{\prime\prime}$ are constants independent of $\epsilon$. 
 \end{theorem}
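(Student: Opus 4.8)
The plan is to run a hypocoercivity energy estimate directly on $g:=f-\Pi f$, reusing the collision bound already established inside the proof of Lemma~\ref{L1} and closing the transport term with Theorem~\ref{thm3}. First I differentiate (\ref{eqn}) $k$ times in $z$ as in (\ref{Dk}), take the $\widetilde L^2$ scalar product with $D^k g=D^k(f-\Pi f)$, divide by $M(v)$ and integrate over $\mathcal S\times\mathbb R^d$. Because $\Pi$ is self-adjoint on $\widetilde L^2$ and $\Pi D^k g=0$, the time term collapses to $\tfrac12\partial_t\|D^k g\|_\Gamma^2$; and because $\int_{\mathbb R^d}D^k\mathcal Q(f)\,dv=0$, the collision pairing satisfies $\langle D^k\mathcal Q(f),D^k g\rangle_{\widetilde L^2}=\langle D^k\mathcal Q(f),D^k f\rangle_{\widetilde L^2}$. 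This produces
\[
\frac{\epsilon^2}{2}\,\partial_t\|D^k g\|_\Gamma^2+\epsilon\int_{\mathcal S}\langle v\cdot\nabla_x D^k f,\,D^k g\rangle_{\widetilde L^2}\,d\mu=\int_{\mathcal S}\langle D^k\mathcal Q(f),D^k f\rangle_{\widetilde L^2}\,d\mu .
\]

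The right-hand side is exactly the quantity already controlled in Lemma~\ref{L1}: by (\ref{RHS1}) it is bounded by $-\tfrac{\sigma_{\rm min}}{2}\|D^k(f-\Pi f)\|_\Gamma^2+\tfrac{4^k C_\sigma^2}{2\sigma_{\rm min}}\|\Pi f-f\|_{\Gamma^{k-1}}^2$, so the coercivity (\ref{coer}) supplies the dissipation $-\tfrac{\sigma_{\rm min}}{2}\|D^k g\|_\Gamma^2$ while the commutator contributes only lower-order $\Gamma^{k-1}$ norms of $g$. I expect this step to be routine, since it reuses computations already done.

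The transport term is the crux and the main obstacle. In the bounded-velocity setting of \cite{JinLiuMa-2016} one simply dominates $v\cdot\nabla_x D^k f$ by $\nabla_x D^k f$; here $v\in\mathbb R^d$ forbids this, which is the whole reason subsection~\ref{vf} was needed. Instead I apply Cauchy--Schwarz and Young's inequality,
\[
\epsilon\Big|\int_{\mathcal S}\langle v\cdot\nabla_x D^k f,\,D^k g\rangle_{\widetilde L^2}\,d\mu\Big|\le \frac{\sigma_{\rm min}}{4}\|D^k g\|_\Gamma^2+\frac{\epsilon^2}{\sigma_{\rm min}}\|D^k(v\cdot\nabla_x f)\|_\Gamma^2,
\]
and invoke Theorem~\ref{thm3}, which gives $\|D^k(v\cdot\nabla_x f)\|_\Gamma\le 2\widetilde C_1$ uniformly in $\epsilon$ and $t$. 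This is precisely where the exponential-decay regularity of $v\cdot\nabla_x f$ enters: it is what turns the transport forcing into an $O(\epsilon^2)$ contribution, whereas without it the unbounded velocity would destroy the uniform-in-$\epsilon$ bound.

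Collecting the three estimates yields $\epsilon^2\partial_t\|D^k g\|_\Gamma^2\le-\tfrac{\sigma_{\rm min}}{2}\|D^k g\|_\Gamma^2+\tfrac{4^k C_\sigma^2}{\sigma_{\rm min}}\|g\|_{\Gamma^{k-1}}^2+C\epsilon^2$, which I close by induction on $k$. For $k=0$ the middle term is absent, so Gronwall's inequality gives $\|g\|_\Gamma^2\le e^{-\sigma_{\rm min}t/2\epsilon^2}\|g_0\|_\Gamma^2+C'\epsilon^2$, matching (\ref{thm2a}). For the inductive step, the hypothesis bounds $\|g\|_{\Gamma^{k-1}}^2\le C\epsilon^2$ on $(0,T]$, so the entire forcing is again $O(\epsilon^2)$ and Gronwall reproduces (\ref{thm2a}) at level $k$. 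The factor $\tfrac12$ in the exponent is a bookkeeping consequence of splitting the coercivity constant between the collision commutator and the transport term via Young's inequality.
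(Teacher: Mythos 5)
Your proposal is correct and follows essentially the same route as the paper's proof: both derive the energy identity for $\|D^k(\Pi f-f)\|_\Gamma^2$ (the paper by first subtracting the $\Pi$-projected equation, you by pairing the original equation with $D^k g$ and using orthogonality to $\mathrm{Kern}\,\mathcal Q$ — equivalent manipulations), bound the collision term by (\ref{RHS1}), close the transport term via Young's inequality and Theorem \ref{thm3}, and finish by induction on $k$ with Gronwall. No gaps.
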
  

\begin{proof}
Take the projection $\Pi$ on both sides of (\ref{eqn}), 
\begin{equation}\label{PI}\epsilon^2 \partial_t (\Pi f) + \epsilon \Pi (v\cdot\nabla_x f) = 0. \end{equation}
Subtract (\ref{PI}) by (\ref{eqn}), 
\begin{equation}\label{PI1} \epsilon^2 \partial_t (\Pi f -f) + \epsilon \left( \Pi(v\cdot \nabla_x f) - v\cdot\nabla_x f\right) = -\mathcal Q(f). \end{equation}
Differentiating (\ref{PI1}) $k$ times and taking the scalar product with $D^k (\Pi f-f)$, one gets
\begin{align}
&\displaystyle \epsilon^2 \partial_t ||D^k (\Pi f-f)||_{\Gamma}^2 = -2 \,\epsilon \underbrace{\int_{\mathcal S}\,
\langle D^k \Pi (v\cdot \nabla_x f) - D^k (v\cdot\nabla_x f), D^k (\Pi f-f) \rangle_{\widetilde L^2}\, d\mu}_{I}  \notag\\[4pt]
&\label{II}\displaystyle\qquad\qquad\qquad\qquad\qquad +
2\underbrace{\int_{\mathcal S}\,\langle D^k \mathcal Q(f), D^k (f-\Pi f)\rangle_{\widetilde L^2}\, d\mu}_{II}\,.
\end{align}
Notice that $\displaystyle\int_{\mathcal S}\,\langle D^k \mathcal Q(f), D^k \Pi f\rangle_{\widetilde L^2}^2\, d\mu = 0$, 
the estimate of term $II$ is given in (\ref{RHS1}). 

To estimate term $I$, since $$\int_{\mathcal S}\,\langle D^k \Pi (v\cdot \nabla_x f), D^k (\Pi f-f) \rangle_{\widetilde L^2}\, d\mu=0, $$
it remains to estimate $\displaystyle \epsilon\int_{\mathcal S}\,\langle D^k (v\cdot\nabla_x f), D^k (\Pi f-f) \rangle_{\widetilde L^2}\, d\mu$. 
In section \ref{vf}, the 
 estimate for $||D^k (v\cdot\nabla_x f)||_{\Gamma}$ has been done. 
With the help of Theorem \ref{thm3}, the rest of the proof mostly follows
\cite{JinLiuMa-2016}. For completeness, we write it out. 

By Young's inequality, 
\begin{align}
&\displaystyle\quad\epsilon\int_{\mathcal S}\langle D^k (v\cdot\nabla_x f), \, D^k (\Pi f-f) \rangle_{\widetilde L^2}\, d\mu \notag\\[4pt]
&\displaystyle \leq \frac{\sigma_{\rm min}}{4}\, ||D^k (\Pi f -f)||_{\Gamma}^2 +\frac{\epsilon^2}{\sigma_{\rm min}}\, ||D^k (v\cdot\nabla_x f)||_{\Gamma}^2 \notag\\[4pt]
&\label{I}\displaystyle \leq \frac{\sigma_{\rm min}}{4}\, ||D^k (\Pi f -f)||_{\Gamma}^2  + \frac{\widetilde C_1 \epsilon^2}{\sigma_{\rm min}} (e^{-\frac{C_2 t}{\epsilon^2}}+1)  \leq \frac{\sigma_{\rm min}}{4}\, ||D^k (\Pi f -f)||_{\Gamma}^2  + \frac{2 \widetilde C_1 \epsilon^2}{\sigma_{\rm min}}\,.
\end{align}
By (\ref{II}), using the estimate (\ref{RHS1}) and (\ref{I}), one gets
\begin{equation}\label{MI3}\epsilon^2\partial_t ||D^k (\Pi f -f)||_{\Gamma}^2 
\leq -\frac{\sigma_{\rm min}}{2} ||D^k(\Pi f -f)||_{\Gamma}^2 + \frac{C_{\sigma}^2 4^k}{\sigma_{\rm min}}
\, ||\Pi f -f ||_{\Gamma^{k-1}}^2 + \frac{4 \widetilde C_1 \epsilon^2}{\sigma_{\rm min}}. \end{equation} \\[0.1pt]

We prove Theorem \ref{thm2} using Mathematical Induction. When $k=0$, (\ref{MI3}) becomes 
$$\epsilon^2 \partial_t ||\Pi f-f||_{\Gamma}^2 \leq -\frac{\sigma_{\rm min}}{2} ||\Pi f -f||_{\Gamma}^2 + \frac{4 \widetilde C_1 \epsilon^2}{\sigma_{\rm min}}. $$
By Gronwall's inequality,  $$||\Pi f-f||_{\Gamma}^2 \leq e^{-\sigma_{\rm min} t/2\epsilon^2}\, ||\Pi f_0 -f_0||_{\Gamma}^2 +
\frac{8 \widetilde C_1}{\sigma_{\rm min}^2}
\epsilon^2 \leq C_0 \epsilon^2, $$
which satisfies (\ref{thm2a}).  
Assume for any $k\leq p$ where $p\in\mathbb N$, the conclusion (\ref{thm2a}) holds. Thus 
$$ ||\Pi f -f||_{\Gamma^p}^2 \leq C_p\,\epsilon^2. $$

When $k=p+1$, (\ref{MI3}) reads
$$\epsilon^2\partial_t ||D^{p+1} (\Pi f -f)||_{\Gamma}^2 
\leq -\frac{\sigma_{\rm min}}{2} ||D^{p+1} (\Pi f-f)||_{\Gamma}^2 + \frac{C_{\sigma}^2\, 4^{p+1}}{\sigma_{\rm min}} C_p\, \epsilon^2 + 
\frac{4 \widetilde C_1 \epsilon^2}{\sigma_{\rm min}},$$
which is equivalent to 
$$\partial_t ||D^{p+1} (\Pi f -f)||_{\Gamma}^2 \leq -\frac{\sigma_{\rm min}}{2\epsilon^2} ||D^{p+1} (\Pi f-f)||_{\Gamma}^2 + C_{p+1}^{\prime}, $$
where $\displaystyle C_{p+1}^{\prime}=\frac{C_{\sigma}^2\, 4^{p+1}}{\sigma_{\rm min}} C_p + 
\frac{4 \widetilde C_1}{\sigma_{\rm min}}$. 
By Gronwall's inequality, 
$$||D^{p+1} (\Pi f -f)||_{\Gamma}^2 \leq e^{-\sigma_{\rm min} t/2\epsilon^2}\, ||D^{p+1} (\Pi f -f)||_{\Gamma}^2 + C_{p+1}^{\prime\prime}\epsilon^2 \leq
C_{p+1}\epsilon^2, $$
where $C_{p+1}^{\prime\prime}$ and $C_{p+1}$ are constants independent of $\epsilon$. By Mathematical Induction, we finish the proof of Theorem \ref{thm2}.    
\end{proof}

\section{A Uniform Spectral Convergence in $\epsilon$}
\label{UC}
The main purpose of this section is to obtain the uniform spectral convergence of the gPC-SG method for problem (\ref{eqn}), 
as shown in Theorem \ref{thm4}. 

Let $f$ be the solution to (\ref{eqn}). We define the $K$-th order projection operator 
$$P_{K}f=\sum_{|\bj|=1}^{K}\langle f,\psi_{\bj}\rangle_{\pi}\, \psi_{\bj}\,.$$
The error arisen from the gPC-SG can be split into two parts $R_{K}$ and $e_{K}$,
\begin{equation}f-f_{K}=(f-P_{K}f) + (P_{K}f-f_{K}):=R_{K}+e_{K},\label{f_K}\end{equation}
where $R_K=f-P_{K}f$ is the projection error, and 
$$e_K=P_{K}f-f_{K}=\sum_{|\bj|=1}^{K}\big(\langle f,\psi_{\bj}\rangle_{\pi}-f_{\bj}\big)\, \psi_{\bj}=\hat\be\cdot\boldsymbol{\psi},$$
where $\displaystyle\hat\be=\big(\langle f,\psi_1\rangle_{\pi}-f_1, \cdots, \langle f,\psi_{K}\rangle_{\pi}-f_{K}\big)$ is the numerical error, and $\boldsymbol{\psi}=(\psi_1,\cdots,\psi_{K})$. 
\\[2pt]

Define the operator $\displaystyle\mathcal L = \epsilon^2 \partial_t +\epsilon v\cdot\nabla_x -\mathcal Q$. 
Recall the Lemma given in \cite{JinLiu-2016}: 
\begin{lemma}
\label{lemma1}
The operator  
$\displaystyle\mathcal L = \epsilon^2 \partial_t +\epsilon v\cdot\nabla_x -\mathcal Q$ satisfies the following equality:
$$\langle\mathcal L(R_K),\boldsymbol{\psi}\rangle_{\pi}=-\langle\mathcal Q(R_K),\boldsymbol{\psi}\rangle_{\pi}. $$ \\[1pt]
\end{lemma}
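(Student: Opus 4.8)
The plan is to exploit the defining property of the orthogonal projection $P_K$. Since $P_K f$ is the $\langle\cdot,\cdot\rangle_\pi$-orthogonal projection of $f$ onto $\mathbb P_N^d = \mathrm{span}\{\psi_\bj : 1\leq|\bj|\leq K\}$, the residual $R_K = f - P_K f$ is orthogonal to every basis function, i.e. $\langle R_K, \psi_\bj\rangle_\pi = 0$ for all $\bj$ with $1\leq|\bj|\leq K$, and this identity holds for every fixed $(t,x,v)$. Equivalently, $\langle R_K, \boldsymbol\psi\rangle_\pi = 0$ as a vector-valued function of $(t,x,v)$. First I would expand $\mathcal L(R_K) = \epsilon^2\partial_t R_K + \epsilon\, v\cdot\nabla_x R_K - \mathcal Q(R_K)$ and pair each term against $\boldsymbol\psi$; the lemma then reduces to showing that the two transport contributions $\langle \partial_t R_K, \boldsymbol\psi\rangle_\pi$ and $\langle v\cdot\nabla_x R_K, \boldsymbol\psi\rangle_\pi$ both vanish.

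The key observation is that $\partial_t$ and $v\cdot\nabla_x$ differentiate only in $t$ and $x$, whereas the basis functions $\psi_\bj(z)$ and the weight $\pi(z)$ depend solely on $z$. Hence these operators commute with the $z$-integration that defines $\langle\cdot,\cdot\rangle_\pi$. Under the regularity of $f$ in the random space already established in Theorem \ref{thm1} (which justifies differentiating under the integral sign), I obtain
$$\langle \partial_t R_K, \psi_\bj\rangle_\pi = \partial_t\langle R_K, \psi_\bj\rangle_\pi, \qquad \langle v\cdot\nabla_x R_K, \psi_\bj\rangle_\pi = v\cdot\nabla_x\langle R_K, \psi_\bj\rangle_\pi.$$
Since $\langle R_K, \psi_\bj\rangle_\pi \equiv 0$ identically in $(t,x)$, both right-hand sides vanish, so the two transport terms drop out and only the collision term survives, giving $\langle \mathcal L(R_K),\boldsymbol\psi\rangle_\pi = -\langle\mathcal Q(R_K),\boldsymbol\psi\rangle_\pi$.

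I expect no serious analytic obstacle here: the statement is essentially the Galerkin orthogonality expressing that the transport part of $\mathcal L$ commutes with $P_K$ on the finite-dimensional gPC space. The only point requiring a word of care is the interchange of the transport operators with the $z$-integration, which is routine given the assumed regularity. It is worth emphasizing by contrast why the collision term does \emph{not} disappear: the scattering kernel $\sigma(v,w,z)$ depends on $z$, so $\mathcal Q$ genuinely couples the gPC modes and does not commute with integration against $\pi(z)$; consequently $\langle \mathcal Q(R_K),\boldsymbol\psi\rangle_\pi$ need not vanish and remains as the right-hand side of the claimed identity.
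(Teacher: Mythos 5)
Your argument is correct and is the standard Galerkin-orthogonality proof: since $\langle R_K,\psi_{\bj}\rangle_{\pi}\equiv 0$ in $(t,x,v)$ and the transport operators $\partial_t$, $v\cdot\nabla_x$ commute with the $z$-integration, only the collision term survives. The paper itself does not prove this lemma but merely recalls it from \cite{JinLiu-2016}, so there is nothing different to compare against; your proof supplies exactly the expected justification.
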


Now since $\mathcal L(f)=0$, $P_{K}\mathcal L(f_K)=0$, thus 
\begin{equation} \label{error1}
\langle\mathcal{L}(e_K), \boldsymbol\psi\rangle_{\pi}=-\langle\mathcal{L}(R_K), \boldsymbol\psi\rangle_{\pi}\,. \end{equation}
Denote $\displaystyle d\chi=\frac{1}{M(v)}\, dvdx\pi(z)dz$, and $\mathcal W=\Omega\times\mathbb R^d\times I_z$. 
Taking the scalar product of (\ref{error1}) with $\hat\be$ and integrating on $\mathcal W$, one gets
$$\frac{\epsilon^2}{2}\partial_t ||e_K||_{\Gamma}^2 - \int_{\mathcal W}\, \langle \mathcal Q(e_K), \boldsymbol{\psi}\rangle_{\pi} \cdot\hat\be\, d\chi
= -\int_{\mathcal W}\, \langle \mathcal L(R_K), \boldsymbol{\psi}\rangle_{\pi} \cdot\hat\be\, d\chi\,,$$
that is, $$\frac{\epsilon^2}{2}\partial_t ||e_K||_{\Gamma}^2 =\underbrace{\int_{\mathcal W}\, \langle \mathcal Q(e_K), e_K \rangle_{\pi}\, d\chi}_{III}
\underbrace{-\int_{\mathcal W}\, \langle \mathcal L(R_K), e_K \rangle_{\pi}\, d\chi}_{IV}\,.$$
Notice that $\displaystyle\int_{\mathcal W}\, \langle \mathcal Q(R_K), \,\Pi e_K \rangle_{\pi}\, d\chi =0$, thus
\begin{equation}\label{III} \int_{\mathcal W}\, \langle \mathcal Q(R_K), e_K \rangle_{\pi}\, d\chi = \int_{\mathcal W}\, \langle \mathcal Q(R_K),\, e_K - \Pi e_K \rangle_{\pi}\, d\chi\,.
\end{equation}
Since $||\sigma||_{L^{\infty}(v,z)}\leq C_{\sigma}$, then 
\begin{align*}
&\displaystyle \left|\mathcal Q(R_K)\right| \leq C_{\sigma}\left| M(v)\int_{\mathbb R^d} R_K(w,z)dw - \int_{\mathbb R^d}M(w)dw\, R_K(v,z)\right| \\[2pt]
&\displaystyle\qquad\qquad  =C_{\sigma}\left|\Pi R_K -R_K\right|, 
\end{align*}
which gives 
\begin{equation}\label{IV}||\mathcal Q(R_K)||_{\Gamma}^2 \leq C_{\sigma}^2 \, ||R_K-\Pi R_K||_{\Gamma}^2\,.\end{equation}
According to Lemma \ref{lemma1}, (\ref{III}), (\ref{IV}) and Young's inequality, one has 
\begin{align}
&\displaystyle IV = -\int_{\mathcal W}\, \langle \mathcal L(R_K), e_K\rangle_{\pi}\, d\chi = 
\int_{\mathcal W}\, \langle \mathcal Q(R_K), \,e_K-\Pi e_K \rangle_{\pi}\, d\chi \notag\\[4pt]
&\displaystyle\quad\leq \frac{\sigma_{\rm min}}{2}|| e_K -\Pi e_K ||_{\Gamma}^2 + \frac{1}{2\sigma_{\rm min}} ||\mathcal Q(R_K)||_{\Gamma}^2 \notag
\\[4pt]
&\label{error2}\displaystyle\quad\leq  \frac{\sigma_{\rm min}}{2}|| e_K -\Pi e_K ||_{\Gamma}^2 + \frac{C_{\sigma}^2}{2\sigma_{\rm min}} ||R_K-\Pi R_K||_{\Gamma}^2\,.
\end{align}

By the standard error estimate for orthogonal polynomial approximations and Theorem \ref{thm1}, 
\begin{equation} \label{RK1} ||R_K||_{\Gamma} \leq C_1 K^{-m}\, ||D^m f||_{\Gamma}\leq\frac{C_1 C}{K^m}\,.
\end{equation}
According to Theorem \ref{thm2}, 
\begin{align}&\displaystyle ||R_K-\Pi R_K||_{\Gamma} = || (\Pi f -f) - (\Pi (P_K f) - P_K f) ||_{\Gamma}\notag \\[4pt]
&\label{RK2}\displaystyle  \qquad\qquad\qquad\quad \leq C_2^{\prime} K^{-m}\, ||D^m (\Pi f -f)||_{\Gamma} 
\leq \frac{C_2}{K^m}\epsilon\,,
\end{align}
where $C_2=C_2^{\prime}C^{\prime\prime}$. 
By the coercivity property of $\mathcal Q$, 
\begin{equation}\label{eK1}III=\int_{\mathcal W}\, \langle \mathcal Q(e_K), \, e_K \rangle_{\pi}\, d\chi \leq -\sigma_{\rm min}||e_K -\Pi e_K||_{\Gamma}^2\,.
\end{equation}
Adding up terms $III$ and $IV$, using (\ref{error2}), (\ref{RK2}) and (\ref{eK1}), one has
\begin{align*}&\displaystyle \frac{\epsilon^2}{2}\partial_t ||e_K||_{\Gamma}^2  \leq -\frac{\sigma_{\rm min}}{2} ||e_K -\Pi e_K||_{\Gamma}^2 + \frac{C_{\sigma}^2}{2\sigma_{\rm min}}
 ||R_K-\Pi R_K||_{\Gamma}^2 \\[4pt]
&\displaystyle\qquad\qquad\quad\leq -\frac{\sigma_{\rm min}}{2} ||e_K -\Pi e_K||_{\Gamma}^2 + \frac{1}{2\sigma_{\rm min}}\left(\frac{C_{\sigma} C_2}{K^m}\right)^2\epsilon^2  \leq \left(\frac{C}{K^m}\right)^2 \epsilon^2\,. 
 \end{align*}
Thus, \begin{equation}\label{eK2}||e_K||_{\Gamma} \leq \frac{C(T)}{K^m}\,.\end{equation}

Now we can conclude the following theorem on the uniform convergence in $\epsilon$ of the stochastic Galerkin method. 
\begin{theorem}
\label{thm4}
If all the assumptions in Theorem \ref{thm1}, Theorem \ref{thm3} and Theorem \ref{thm2} are satisfied, 
the error of the gPC-SG method is given by 
$$ ||f-f_K||_{\Gamma} \leq \frac{C(T)}{K^m}\,,$$
where $C(T)$ is a constant independent of $\epsilon$. 
\end{theorem}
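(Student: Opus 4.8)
The plan is to assemble the final bound from the two error contributions in the decomposition (\ref{f_K}), $f - f_K = R_K + e_K$, by a single application of the triangle inequality, $\|f - f_K\|_\Gamma \leq \|R_K\|_\Gamma + \|e_K\|_\Gamma$. The projection error $R_K$ is handled by the standard spectral estimate (\ref{RK1}), which together with the uniform regularity of Theorem \ref{thm1} gives $\|R_K\|_\Gamma \leq C_1 C\, K^{-m}$ with a constant independent of $\epsilon$. The numerical error $e_K$ is controlled by (\ref{eK2}), namely $\|e_K\|_\Gamma \leq C(T)\, K^{-m}$. Both are $O(K^{-m})$ uniformly in $\epsilon$, so their sum is as well, and absorbing the constants into a single $C(T)$ completes the proof.

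The substance of the argument lives in establishing the $e_K$ bound (\ref{eK2}) uniformly in $\epsilon$, which I would organize around the energy identity for the error equation (\ref{error1}). Testing the projected residual equation against $\hat\be$ and integrating over $\mathcal W$ produces the coercive term $III \leq -\sigma_{\rm min}\|e_K - \Pi e_K\|_\Gamma^2$ and a source term $IV$ driven by $\mathcal L(R_K)$. The role of Lemma \ref{lemma1} is decisive here: it collapses $\mathcal L$ to $-\mathcal Q$ when paired with $\boldsymbol\psi$, so the transport and time-derivative parts of $\mathcal L(R_K)$ drop out and $IV$ only feels $\mathcal Q(R_K)$, whose size is bounded by the \emph{fluctuation} $\|R_K - \Pi R_K\|_\Gamma$ through (\ref{IV}).

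The main obstacle, and the mechanism that makes the whole estimate uniform, is extracting an extra power of $\epsilon$ from this fluctuation. By Theorem \ref{thm2} the microscopic deviation $\Pi f - f$ is $O(\epsilon)$, and this smallness is inherited by $R_K - \Pi R_K$, yielding $\|R_K - \Pi R_K\|_\Gamma = O(\epsilon K^{-m})$ in (\ref{RK2}). After a Young splitting (\ref{error2}) absorbs $\|e_K - \Pi e_K\|_\Gamma^2$ into the coercive gain, the remaining source is of order $(\epsilon K^{-m})^2$, precisely matching the $\epsilon^2$ prefactor on $\partial_t \|e_K\|_\Gamma^2$; the factors of $\epsilon$ cancel and Gronwall's inequality delivers (\ref{eK2}) with an $\epsilon$-free constant. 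Were it not for the $\epsilon$-gain of Theorem \ref{thm2}, the source would scale like $1/\epsilon^2$ after dividing through, and no uniform bound would survive — so the sharpened estimate on $\Pi f - f$ is exactly the ingredient that upgrades the non-uniform spectral convergence of \cite{JinLiu-2016} to the uniform-in-$\epsilon$ result claimed here.
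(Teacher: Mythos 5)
Your proposal is correct and follows essentially the same route as the paper: the triangle inequality on the decomposition $f-f_K=R_K+e_K$, the spectral bound (\ref{RK1}) via Theorem \ref{thm1} for $R_K$, and the energy estimate for $e_K$ in which Lemma \ref{lemma1} reduces $\mathcal L(R_K)$ to $-\mathcal Q(R_K)$ and the $O(\epsilon)$ bound on $\|R_K-\Pi R_K\|_\Gamma$ from Theorem \ref{thm2} cancels the $\epsilon^2$ prefactor to give (\ref{eK2}) uniformly in $\epsilon$. You have also correctly identified the key mechanism distinguishing this uniform result from the non-uniform convergence of \cite{JinLiu-2016}.
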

\begin{proof}
Using (\ref{RK1}) and (\ref{eK2}), one has
$$ ||f-f_K||_{\Gamma} \leq ||R_K||_{\Gamma} + ||e_K||_{\Gamma} \leq \frac{C(T)}{K^m}\,,$$
where $C(T)$ is a constant independent of $\epsilon$. This completes the proof. 
\end{proof}

\begin{remark}
Differed from \cite{JinLiuMa-2016} for this part, one additionally needs Theorem \ref{thm3} to 
complete the proof of uniform spectral convergence of the SG method for numerically solving problem (\ref{eqn}). 
\end{remark}

\section{Conclusion}
\label{Con}
In this paper, we establish the {\it uniform-in-Knudsen-number} spectral accuracy of the stochastic Galerkin method for 
the linear semiconductor Boltzmann equation with random inputs and diffusive scalings, 
which consequently allows us to justify the stochastic AP property of the gPC-based stochastic Galerkin method proposed in \cite{JinLiu-2016}. 
Extensive numerical examples have been shown in \cite{JinLiu-2016} to validate the main result of this paper: 
{\it uniform spectral convergence} of the gPC-SG method, i.e., 
the number of polynomial chaos can be chosen {\it independent} of the Knudsen number, yet can still capture the solutions to the 
Galerkin system of the limiting drift-diffusion equations shown in (\ref{diff2}), with a spectral accuracy. 
It is expected that our approach to prove the uniform convergence of the stochastic Galerkin method will be useful for 
more general kinetic equations, for example when the external potential is involved. 

\section*{Acknowledgement}
The author would like to thank Prof. Shi Jin and Prof. Jianguo Liu for encouraging the author to think about this project. 


\medskip
\medskip

\bibliographystyle{siam}
\bibliography{main.bib}

\end{document}